 \newtheorem{theorem}{Theorem}[section]
 \newtheorem{corollary}[theorem]{Corollary}
 \newtheorem{lemma}[theorem]{Lemma}
 \newtheorem{proposition}[theorem]{Proposition}
\theoremstyle{definition}
\newtheorem{definition}[theorem]{Definition}
\theoremstyle{remark}
\newtheorem{fact*}{Fact}
\DeclareMathOperator{\RE}{Re}
\newcommand{\hilbert}{\mathcal{H}}
\newcommand{\T}{\mathbb{T}}
\newcommand{\D}{\mathbb{D}}
\newcommand{\C}{\mathbb{C}}
\newcommand{\BH}{\mathrm{B}(\mathcal{H})}
\newcommand{\norm}[1]{\left\Vert#1\right\Vert}
\newcommand{\ip}[2]{\left\langle #1, #2 \right\rangle}
\newcommand{\ad}{^\ast}
\newcommand{\inv}{^{-1}}
\newcommand{\til}{\raise.17ex\hbox{$\scriptstyle\mathtt{\sim}$}}
\newcommand\la{\lambda}
\newcommand\beq{\begin{equation}}
\newcommand\eeq{\end{equation}}
\newcommand{\vectwo}[2]
{
   \begin{pmatrix} #1 \\ #2 \end{pmatrix}
}
\newcommand\blue{\color{black}}
\newcommand\black{\color{black}}
\newcommand\red{\color{black}}
\newcommand\notred{\color{black}}
\newcommand{\bbm}{\left[ \begin{smallmatrix}}
\newcommand{\ebm}{\end{smallmatrix} \right]}
\newcommand{\bpm}{\left( \begin{smallmatrix}}
\newcommand{\epm}{\end{smallmatrix} \right)}
\numberwithin{equation}{section}
\newcommand{\tensor}[2]{\text{ }{\begin{smallmatrix} #1 \\ \otimes\\ #2\end{smallmatrix}}\text{  }}
\newlength{\Mheight}
\newlength{\cwidth}
\newcommand{\mc}{\settoheight{\Mheight}{M}\settowidth{\cwidth}{c}M\parbox[b][\Mheight][t]{\cwidth}{c}}
\newcommand{\dfn}[1]{{\bf #1}\index{#1}}
\newcommand{\BallB}[1]{\mathrm{Ball}(#1)}
\newcommand{\RHPB}[1]{\mathrm{RHP}(#1)}
\newcommand{\MU}[1]{\mathcal{M}(#1)}
\newcommand{\regherglotzclass}[2]{\mathrm{RHerglotz}(#1,#2)}
\newcommand{\repregherglotzclass}[2]{\mathrm{RRHerglotz}(#1,#2)}
\newcommand{\Free}[2]{\mathrm{Free}(#1, #2)}
\title[Representation of free Herglotz functions]{Representation of free Herglotz functions}
\author{
J. E. Pascoe$^\dagger$
}
\address{Department of Mathematics\\
  Washington University in St. Louis\\
  One Brookings Drive \\
 St. Louis, MO 63130}
\email[J. E. Pascoe]{pascoej@math.wustl.edu}
\thanks{$\dagger$ Partially supported by National Science Foundation Mathematical
Science Postdoctoral Research Fellowship  
DMS 1606260}
\author{
Benjamin Passer$^\ddagger$
}
\address{Mathematics Department \\
Technion - Israel Institute of Technology \\
Haifa, 320003\\
Israel}
\email[B. Passer]{\blue benjaminpas@technion.ac.il \black}
\thanks{\blue $\ddagger$ Partially supported by a Zuckerman fellowship at the Technion. \black}
\author{
Ryan Tully-Doyle
}
\address{Department of Mathematics and Physics \\
University of New Haven\\
West Haven, CT 06516 }
\email[R. Tully-Doyle]{rtullydoyle@newhaven.edu}
\date{\today}
\subjclass[2010]{Primary 46L54, 46L53 Secondary 32A70, 46E22}
\begin{document}

\begin{abstract}
A Herglotz function is a holomorphic map from the open complex unit disk into the closed complex right halfplane. A classical Herglotz function has an integral representation against a positive measure on the unit circle. We prove a free analytic analogue of the Herglotz representation and describe how our representations specialize to the free probabilistic case. We also show that the set of representable Herglotz functions arising from noncommutative conditional expectations must be closed in a natural topology.
\end{abstract}
\maketitle

\tableofcontents

\section{Introduction}
A classical Herglotz function is a holomorphic map from the unit disk into the complex right half plane. The following representation characterizes a Herglotz function as an integral against a probability measure on the unit circle $\T$.
\begin{theorem}[Herglotz \cite{herg11}] \label{herglotzclassic}
A holomorphic function $h$ defined on $\D$ satisfies $\RE h \geq 0$ and $h(0) = 1$ if and only if there exists a (unique) probability measure $\mu$ supported on $\T$ such that 
	\beq\label{classherg}h(x) = \int_\T \frac{1 + e^{i\theta}x}{1 - e^{i\theta}x}d\mu(e^{i\theta}).\eeq
\end{theorem}

\red The correct analogue of the Herglotz representation in several complex variables is somewhat elusive. However, in two complex variables, a useful theory of Herglotz representations was developed by J. Agler in \cite{ag90}, along the lines of operator theory. For functions in one variable, we see that the Herglotz representation given in Equation \eqref{classherg} can be reinterpreted as
\beq\label{opherg}
 h(x) = \ip{(1 + Ux)(1 - Ux)\inv \alpha}{\alpha},
\eeq
where $U$ is multiplication by $z$ acting on $L^2(\mu)$ and $\alpha$ is the constant function $1 \in L^2(\mu)$. It can be shown that for an arbitary unitary $U$ and unit vector $\alpha$, the formula \eqref{opherg} defines a function taking the disk to the right half-plane.

In \cite{ag90}, Agler extends \eqref{opherg} to two variables: any analytic function $h$ from $\D^2$ satisfying $\RE h \geq 0$ with $h(0) = 1$ must be of the form
\[
 h(x) = \ip{(1 + Ux_P)(1 - Ux_P)\inv \alpha}{\alpha}
\]
where $U$ is a unitary, $\alpha$ is a unit vector, and $x_P = x_1 P + x_2 (1 - P)$ for a projection $P$.

A further reinterpretation of this along current lines of inquiry in free probability (e.g. \cite{will13, speich} among many others) would be
\[
 h(x) = R((1 + Ux)(1 - Ux)\inv)
\]
where $U$ is a unitary contained in some $C^\ast$-algebra $B$ containing $\C^2$ as a unitally included subalgebra, and $R$ is a (completely) positive map from $B$ to $\C$ . Here, $B$ is the algebra generated by $U$ and $P$ in $B(\mathcal H)$, the inclusion of $\C^2$ is given by $(x_1, x_2) \mapsto x_1 P + x_2 (1-P)$, and $R$ is the (completely) positive map $R(b) = \ip{b\alpha}{\alpha}$. 
\black

Herglotz functions have been generalized to many contexts. Recent work by Michael Anshelevich and John D. Williams has dealt with a related class of functions from a noncommutative upper half plane into itself and various analogues of ``integral representations'' arising in free probability \cite{anw14, will13}. However, an exact correspondence for free probability as in Theorem \ref{herglotzclassic} is unknown. 
In Theorem \ref{exacthasrep}, we show that the correspondence exists.
However, the analogue of a measure involved in the representation is highly non-unique. \red Free probability itself has found may applications including random matrix theory (see \cite{speich} for an introduction), and we hope that the theory of Herglotz representations and their relatives will allow ideas from classical probability, functional analysis, and moment theory to be incorporated into the framework of free probability and its applications. \black

\subsection{The noncommutative context}
Let $B$ be a unital $C^*$-algebra. 
The \dfn{matrix universe over $B$}, denoted $\MU{B},$ is the set of square matrices over $B$, \blue which we write as the disjoint union \black
	$$\MU{B} = \bigcup^{\infty}_{n=1} M_n(B).$$
\blue That is, $\MU{B}$ is graded over the set of positive integers. \black
Next, the \dfn{ball over $B$,} denoted $\BallB{B},$ is the set of \blue strictly \black contractive matrices over $B$:
	$$\BallB{B} =  \{X \in \MU{B} | \hspace{2pt} \|X\|<1\}.$$
Similarly, the \dfn{right half plane over $B$,} denoted $\RHPB{B},$ is
	$$\RHPB{B} =  \{X \in \MU{B} | \hspace{2pt} \text{Re } X \geq 0\},$$
{\notred where $\text{Re } X = (X + X^*)/2$}.
For any $\mathcal{D} \subset \MU{B_1},$ a \dfn{free function} 
$f: \mathcal{D} \rightarrow \MU{B_2}$ is \blue a function which is \black graded and respects intertwining maps. That is,
\blue
\begin{enumerate}
\item $f(M_n(B_1) \cap \mathcal{D}) \subseteq M_n(B_2)$, and
\item if $\Gamma X = Y\Gamma$ for $X, Y \in \mathcal{D}$ and a rectangular matrix $\Gamma$ of scalars, then $\Gamma f(X) = f(Y) \Gamma$.
\end{enumerate}
In the above, we identify a scalar $z \in \mathbb{C}$ with $z \cdot I \in B$, so we may consider a rectangular matrix $\Gamma$ of scalars as a matrix over $B$. Note in particular that $\Gamma$ need not be square. \black
We denote the set of free functions \red from \black $\mathcal{D}$ to $\mathcal{R}$
by  $\Free{\mathcal{D}}{\mathcal{R}}.$

In this noncommutative context, a \dfn{free Herglotz function} is just a free function $h: \BallB{B_1} \rightarrow \RHPB{B_2}.$ We call a Herglotz function \dfn{regular} if $h(0) =I$, and $h$ has a \dfn{regular Herglotz representation} if there exists
\begin{enumerate}
	\item a $C^*$-algebra $M$ unitally containing $B_1,$
	\item a completely positive unital linear map $R: M \rightarrow B_2$, and 
	\item a unitary $U \in M,$
\end{enumerate}
such that
\beq \label{herglotzintro} h(X) =\tensor{R}{1_n}\left[\left(I + \tensor{U}{I_n} X \right)\left(I - \tensor{U}{I_n} X \right)^{-1}\right] .\eeq
We have adopted a \emph{vertical tensor notation} to save space: $\tensor{A}{B}$
represents the same object as $A \otimes B$.
Here, $1_n$ represents the identity map on $n \times n$
matrices, and $I_n$ represents the $n \times n$ identity
matrix.
We denote the \dfn{set of all regular Herglotz functions} $h: \BallB{B_1} \rightarrow \RHPB{B_2}$
by $\regherglotzclass{B_1}{B_2}$ and endow it with the topology of pointwise weak convergence. \red That is, a net  of functions $(f_\la)_\Lambda$ converges to a function $f$ if \blue and only if \red for every $X \in \BallB{B_1}$, $f_\la(X)$ converges weakly to $f(X)$. \blue This means that \red for every continuous linear functional $L$ on $B_2$, \blue the limit of \red $L(f_\la(X))$ \blue is \red $L(f(X))$. \black
We also denote the \dfn{set of all representable regular Herglotz functions} by $\repregherglotzclass{B_1}{B_2}.$

\subsection{Main results}
We prove two main results.
\begin{theorem} \label{exacthasrep}

$$\repregherglotzclass{B_1}{B_2} = \regherglotzclass{B_1}{B_2}.$$
That is, all regular Herglotz functions are representable.
\end{theorem}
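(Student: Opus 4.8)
The plan is to prove that every regular free Herglotz function $h:\BallB{B_1}\to\RHPB{B_2}$ admits a representation of the form \eqref{herglotzintro}. Since the reverse containment $\repregherglotzclass{B_1}{B_2}\subseteq\regherglotzclass{B_1}{B_2}$ is the routine direction (one checks that any function built from a unital completely positive $R$, a unitary $U$, and the Cayley-type transform on the right is indeed a free function mapping the ball into the right half plane with value $I$ at $0$), the content lies in the forward direction, which is a realization/dilation statement. I would recast the problem as follows: the positive-real-part condition $\RE h(X)\ge 0$ on the ball is a complete-positivity condition, and the goal is to manufacture an auxiliary $C^*$-algebra $M$, a unitary $U\in M$, and a unital completely positive $R:M\to B_2$ implementing $h$.

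The key steps, in order, would be these. First I would reduce to understanding $h$ through its noncommutative Taylor (Taylor--Taylor) expansion at $0$, writing $h(X)=I+2\sum_{k\ge 1} L_k(X,\dots,X)$, where the $L_k$ are the homogeneous parts; the factor structure $(I+\flattensor{U}{I_n}X)(I-\flattensor{U}{I_n}X)^{-1}=I+2\sum_{k\ge1}(\flattensor{U}{I_n}X)^k$ shows that the target coefficients are the ``moments'' $R(U^k)$ suitably applied. Second, I would encode the positivity of $\RE h$ as positive semidefiniteness of an associated noncommutative Hankel-type (moment) form built from the coefficients, using the free/operator-valued analogue of the classical fact that $\RE h\ge 0$ with $h(0)=I$ corresponds to a positive definite Toeplitz/moment sequence. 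Third, from that positive form I would run a Gelfand--Naimark--Segal–style (Kolmogorov/Stinespring) construction to build a Hilbert module or representation Hilbert space together with a shift-like isometry; the heart of the argument is to upgrade that isometry to a \emph{unitary} $U$ (via a unitary dilation of the shift) living in a $C^*$-algebra $M$ containing $B_1$, and to extract $R$ as a compression/expectation that is unital and completely positive. Finally I would verify that the resulting $(M,R,U)$ reproduces $h$ by matching all Taylor coefficients and invoking the identity-theorem/uniqueness for free functions holomorphic on the ball, together with an estimate ensuring convergence of the resolvent series on all of $\BallB{B_1}$.

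The main obstacle I anticipate is the passage from positivity of the coefficient form to the existence of a genuine \emph{unitary} $U$, as opposed to merely an isometry or contraction; equivalently, obtaining a \emph{unital} completely positive $R$ rather than just a completely positive map, while simultaneously honoring the requirement that $M$ unitally contains $B_1$ and that the noncommutative variables $X\in\MU{B_1}$ interact correctly with $U$ under the tensor action $\flattensor{U}{I_n}$. This is delicate because the classical Herglotz argument relies on the commutative spectral theorem on $\T$, which has no direct analogue here; instead one must use a dilation theorem (of Sz.-Nagy--Foias or Stinespring type) carefully adapted to the operator-valued, graded setting so that the free-function structure is preserved. I expect the technical crux to be checking that the dilation is compatible with all the intertwining relations defining free functions, so that the constructed $h$ from \eqref{herglotzintro} is not just correct pointwise on a dense set but is genuinely a free function agreeing with the given $h$ everywhere on $\BallB{B_1}$.
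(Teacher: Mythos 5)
Your outline of the easy direction is fine, and your instinct that the hard direction is a dilation theorem is right, but the central step of your plan has a genuine gap: step two, where you claim that $\RE h \geq 0$ on the free ball yields "positive semidefiniteness of an associated noncommutative Hankel-type (moment) form built from the coefficients" as a "free/operator-valued analogue of the classical fact." No such off-the-shelf fact exists at this level of generality; it is essentially the theorem you are trying to prove. Concretely, the Taylor coefficients of $h$ determine only the \emph{analytic} moments, i.e.\ the values $\phi(u b_1 u b_2 \cdots u b_k)$ multilinear in $b_1,\dots,b_k \in B_1$ (not merely $R(U^k)$, as you write). But any GNS/Kolmogorov construction on $\mathcal{A}_{B_1}$ requires inner products $\langle \pi(w_1)\xi, \pi(w_2)\eta\rangle = \langle \phi(w_2^* w_1)\xi,\eta\rangle$, and for two analytic words $w_1, w_2$ the reduced word $w_2^* w_1$ is generally a \emph{mixed} word in $u$ and $u^*$ that does not simplify (unlike the commutative case, where $u^{*m}u^n = u^{n-m}$, and unlike Popescu's Fock-space setting, where orthogonality of creation operators kills the cross terms). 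These mixed moments are genuinely extra data not determined by $h$ — this is exactly the content of Proposition \ref{noninjectivity} in the paper, where two different operator-valued states produce the same Herglotz function. So before you can run GNS you must \emph{construct} a completely positive extension of the analytic moment data to the whole algebra, and proving that such an extension exists is the actual content of the theorem, not a preliminary step.

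For comparison, the paper fills this hole with two imported tools rather than a moment extension: it Cayley-transforms $h$ to a free Schur function $f$, invokes the Ball--Marx--Vinnikov model theorem (Theorem \ref{ncschurmodel}) to obtain an Agler-type model
\begin{equation*}
T - f(X)\ad T f(X) = u(X)\ad \left[T - \tensor{I_\hilbert}{\delta(X)\ad T \delta(X)}\right] u(X),
\end{equation*}
and then runs a lurking-isometry argument: the model identity is rewritten as a Gramian, a unitary colligation $L = \bbm A & B \\ C & D \ebm$ is produced, and the Schur complement $U = D - C(I+A)\inv B$ (unitary by Lemma \ref{herglotzunitary}, using $h(0)=I$ to force $A=0$) together with the isometry $V = C(I+A)\inv$ gives $h(X) = \tensor{V\ad}{I_n}\left[\left(I + \tensor{U}{I_n}X\right)\left(I - \tensor{U}{I_n}X\right)\inv\right]\tensor{V}{I_n}$, whence $M = C^*(U,B)$ and $R = V^*(\cdot)V$. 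The BMV model theorem is precisely the device that converts pointwise positivity into algebraic positivity data compatible with the free structure; your proposal needs an input of equivalent strength, and without one (or without restricting to the scalar-coefficient case $B_1 = \mathbb{C}^n$, where Popescu's multi-Toeplitz kernel machinery supplies it), the moment-theoretic route does not close.
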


Theorem \ref{exacthasrep} is proven below as Theorem \ref{blockherg}.
Via a standard calculation, we obtain the Herglotz representation when perhaps $h(0) \neq I$
in Corollary \ref{finalcor}. 
 The proof of Theorem \ref{exacthasrep} relies heavily on the Agler model theory which was developed in great generality by Ball, Marx and Vinnikov in the recent preprint \cite[Corollary 3.2]{BMV16}.

We note that the case of Theorem \ref{exacthasrep} where $B_1 = \mathbb{C}^n$ and $B_2 = \mathbb{C}$ was proven by Gelu Popescu \cite{popescumajorant}. An analogue of Theorem \ref{exacthasrep} for functions on the noncommutative upper half plane was shown by John Williams in \cite[Corollary 3.3]{will13} with different hypotheses; the function must analytically continue through some large set on the boundary, and asymptotic conditions are assumed to ensure that $R$ is given by a noncommutative conditional expection. We give 
conditions for a Herglotz function to arise from a Herglotz representation where
$R$ is given by a noncommutative conditional expectation in Theorem \ref{condcond}.

\begin{theorem} \label{introprop:condclosure}
The subset of $\repregherglotzclass{B}{B}$
such that $R$ can be chosen to be a noncommutative conditional expectation is closed in $\Free{\BallB{B}}{\RHPB{B}}$, where the latter is given the pointwise weak topology.
Moreover, if $B$ is a von Neumann algebra, then
the subset of 
	$\repregherglotzclass{B}{B}$ such that $R$ can be chosen to be a noncommutative conditional expectation  is compact.
\end{theorem}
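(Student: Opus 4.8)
The plan is to route the whole argument through a single universal object and convert questions about functions into questions about their representing data. Let $M = B \ast C(\T)$ be the unital free product, i.e.\ the universal unital $C^*$-algebra generated by $B$ together with one unitary $u$, and write $V = u \otimes I_n$. For a unital completely positive map $R \colon M \to B$ set
\[
h_R(X) = (R \otimes 1_n)\bigl[(I + VX)(I - VX)^{-1}\bigr], \qquad X \in \BallB{B}.
\]
By the universal property every representable regular Herglotz function equals some $h_R$: precompose the given representing data with the $\ast$-homomorphism $M \to M'$ that is the inclusion on $B$ and sends $u$ to the given unitary. Moreover $h_R$ comes from a conditional expectation exactly when $R$ may be taken to be a conditional expectation of $M$ onto $B$ (the precomposition of a conditional expectation with that $\ast$-homomorphism is again one, since it fixes $B$ and is a $B$-bimodule map). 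I would first record that $R \mapsto h_R$ is continuous from the point-weak-$\ast$ topology on maps $M \to B$ to the pointwise weak topology: for fixed $X$ with $\|X\|<1$ the element $(I+VX)(I-VX)^{-1} = I + 2\sum_{k\geq 1}(VX)^k \in M_n(M)$ is a norm-convergent series independent of $R$, so $R \mapsto h_R(X)$ is a finite weak-continuous combination of the entrywise values of $R$.

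For closure over a general unital $C^*$-algebra $B$, I would relocate a compactness argument inside the bidual. Given a net $h_\alpha \to h$ of conditional-expectation-representable functions, choose conditional expectations $E_\alpha \colon M \to B \subseteq B^{\ast\ast}$ with $h_{E_\alpha} = h_\alpha$. Viewed as maps into the von Neumann algebra $B^{\ast\ast}$ (whose weak-$\ast$ topology restricts to the weak topology on $B$), the $E_\alpha$ lie in the point-weak-$\ast$ compact set $\mathrm{UCP}(M,B^{\ast\ast})$, so a subnet converges to a map $E \colon M \to B^{\ast\ast}$ that still satisfies $E|_B = \mathrm{id}_B$ and the $B$-bimodule identity, and continuity gives $h_E = h$. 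The main obstacle is exactly here: the limit $E$ need not take values in $B$. Because the representation involves only positive powers of $u$, the function $h_E$ sees $E$ only on the analytic subspace $S = \overline{\mathrm{span}}\{b_0 u b_1 \cdots u b_k\}$, where the values do lie in $B$ (they share the matrix entries of the Taylor coefficients of the $B$-valued function $h$); but on words containing $u^\ast$ the limit may escape into $B^{\ast\ast}\setminus B$, so a priori $E$ is only a conditional expectation onto $B$ sitting inside $B^{\ast\ast}$.

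To overcome this I would reconstruct a genuinely $B$-valued conditional expectation directly from the $B$-valued moment data $m_k(b_1,\dots,b_k) = E(u b_1 \cdots u b_k)$, which inherit complete positivity from the $E_\alpha$. Concretely, I would build a Toeplitz/Fock model over $B$ in which $u$ acts as a shift and the conditional expectation is compression onto the vacuum copy of $B$; such a compression is automatically $B$-valued and reproduces exactly the positive-power moments. Replacing the shift by its minimal unitary dilation leaves those moments unchanged and produces the required unitary. This reconstruction — equivalently, the conditional-expectation criterion of Theorem \ref{condcond}, whose defining conditions are visibly preserved under pointwise weak limits — yields a conditional-expectation representation of $h$. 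Carrying out the model so that the dilated operator is a true unitary and the vacuum compression is a bona fide conditional expectation onto $B$ is the technical heart of the argument.

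Finally, for the compactness statement I take $B$ to be a von Neumann algebra and argue directly on $M$. The set $\mathrm{UCP}(M,B)$ is point-weak-$\ast$ compact by a Banach--Alaoglu argument, since such maps are contractive and $B$ is a dual space; the conditions $R|_B = \mathrm{id}_B$ and $R(b_1 m b_2) = b_1 R(m) b_2$ cut out a closed subset, because left and right multiplication by a fixed element of $B$ is ultraweakly continuous. Hence the conditional expectations $M \to B$ form a point-weak-$\ast$ compact set, and its image under the continuous map $R \mapsto h_R$, which is precisely the conditional-expectation-representable subset, is compact. (The same argument applied to all of $\mathrm{UCP}(M,B)$ shows that $\regherglotzclass{B}{B}$ is itself compact, so compactness of the subset also follows from the closedness established above.)
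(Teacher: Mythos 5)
Your universal-algebra setup and your final paragraph are correct and essentially reproduce the paper's own argument for the compactness claim: the paper (Propositions \ref{closedresult} and \ref{prop:condclosure}) also routes everything through the universal algebra (its $\overline{\mathcal{A}_B}$ is your $B \ast C(\mathbb{T})$), shows $\phi \mapsto g_\phi$ is continuous for the pointwise weak topologies, and, when $B$ is weakly closed, observes that the conditional expectations $E_{\mathcal{A}_B}^{B}$ form a closed subset of the compact set $\Phi_{\mathcal{A}_B}^{\mathcal{H}}$ of operator-valued states, so that their image under the continuous map is compact. Up to the cosmetic difference between working on the dense $*$-algebra $\mathcal{A}_B$ versus its completion, that part of your proposal is the paper's proof.

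The genuine gap is the step you yourself label ``the technical heart,'' namely producing a genuinely $B$-valued conditional expectation in the limit when $B$ is not a von Neumann algebra. The paper never attempts this: it handles general $B$ through the remark that the pointwise weak topology on $\Free{\BallB{B}}{\RHPB{B}}$ is the subspace topology inherited from $\Free{\BallB{B}}{\RHPB{C}}$ with $C = B^{**}$, so the limiting expectation is allowed to remain valued in the enveloping von Neumann algebra and is never pulled back into $B$; your reduction to that point (subnet limit $E \colon M \to B^{**}$, still unital, $B$-bimodule, with $h_E = h$) is correct and is where the paper's route stops. Your proposed repair, however, fails as sketched, for three concrete reasons. (i) If $u$ acts as a degree-raising shift on a Fock-type space and the expectation is compression onto the vacuum, then $S\rho(b_1)\cdots S\rho(b_k)$ maps the vacuum level into the $k$-th level, so every moment with $k \geq 1$ compresses to zero, and the minimal unitary dilation of a pure shift (a bilateral shift) has the same vanishing vacuum moments; such a model can only represent $h \equiv I$. (ii) More fundamentally, the $B$-valued analytic moments do not carry the positivity a GNS/KSGNS reconstruction needs: for analytic words $x = ub_1\cdots ub_k$ and $y = uc_1 \cdots uc_l$, the element $y^{\ast}x$ does not reduce to an analytic or co-analytic word (there is no free analogue of $u^{\ast j}u^{i} = u^{i-j}$), so the required kernel values are exactly the values of $E$ on words containing $u^{\ast}$ --- precisely where $E$ may escape into $B^{**}\setminus B$. (iii) The parenthetical appeal to Theorem \ref{condcond} does not rescue this: that theorem's hypothesis that the spectrum of $\pi_\phi(u)$ is a proper subset of the unit circle is a property of a chosen representation, not of the function $h$, and it is neither automatic nor preserved under pointwise weak limits; the paper explicitly cautions that its proof relies heavily on that assumption. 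So your proposal establishes the von Neumann compactness statement, but the closedness statement for general $B$ remains unproven in your write-up; to align with the paper you would instead phrase and prove the general case relative to the embedding $B \hookrightarrow B^{**}$ rather than via a $B$-valued reconstruction.
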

Theorem \ref{introprop:condclosure} follows from Proposition \ref{prop:condclosure}. In general, if $B_2 \hookrightarrow C$ is a unital inclusion of $C^*$-algebras, then the pointwise weak topology of $\Free{\BallB{B_1}}{\RHPB{B_2}}$ is exactly the subspace topology of $\Free{\BallB{B_1}}{\RHPB{B_2}}$ inside $\Free{\BallB{B_1}}{\RHPB{C}}$ with the pointwise weak topology. Then one may consider the case where $C$ is the enveloping von Neumann algebra of $B_2$. 

Our Theorem \ref{introprop:condclosure} roughly corresponds to the work of Williams \cite{2015williams}, which gave that a certain set of noncommutative Cauchy transforms over a tracial von Neumann algebra should be closed in some sense.
The fact that Herglotz representations arise from unitaries as opposed to unbounded self-adjoint operators makes the theoretical concerns slightly less technical: we essentially show that the set of Herglotz representable functions is the continuous image of a compact space.  

Additionally, we show, in contrast to the classical case of Theorem \ref{herglotzclassic},
that a noncommutative Herglotz representation is highly non-unique in Propositions \ref{noninjectivity} and \ref{prop:condclosure}. For a general $C^*$-algebra not equal to $\mathbb{C}$, there exist Herglotz functions with non-unique representations, and Herglotz representations given by noncommutative conditional expectations can also be non-unique.


\section{The set of regular representable Herglotz functions as a topological space}
In the introduction, Herglotz functions were defined in terms of abstract $C^*$-algebras. We now will now give a somewhat more detailed description of the situation for concrete $C^*$-algebras, and prove our main results. \blue A free function $f: \mathrm{Ball}(B) \to \mathrm{RHP}(B(\mathcal{H}))$ with $f(0) = I$ will be called a $\textbf{(concrete) regular free Herglotz function}$.

Considering \eqref{classherg} and \eqref{opherg}, we are interested in when a concrete regular free Herglotz function $f: \mathrm{Ball}(B) \to \mathrm{RHP}(B(\mathcal{H}))$ admits a representation \black
\begin{equation}\label{eq:herglotzrep}
f(X) = \tensor{R}{1_n}\left[\left(I + \tensor{U}{I_n} \cdot \tensor{\alpha}{1_n}(X)\right)\left(I - \tensor{U}{I_n} \cdot \tensor{\alpha}{1_n}(X)\right)^{-1}\right]
\end{equation}
for $X\in \textrm{Ball}(B) \cap (B \otimes M_n(\mathbb{C}))$. Here $\widehat{\mathcal{H}}$ is a Hilbert space that contains $\mathcal{H}$, $U \in \mathcal{U}(\widehat{\mathcal{H}})$, $R: B(\widehat{\mathcal{H}}) \to B(\mathcal{H})$ is the natural restriction map, $\alpha: B \to B(\widehat{\mathcal{H}})$ is a unital representation, and $1_n: M_n(\mathbb{C}) \to M_n(\mathbb{C})$ is the identity map. The free Herglotz functions which may be written as in \eqref{eq:herglotzrep} are called \textbf{representable}, and one way to produce such functions passes through a universal construction. 
\blue
We note that the representation $\alpha$ used in \eqref{eq:herglotzrep} might not be faithful. However, if $\alpha$ is not faithful, there exists a faithful representation $\tilde{\alpha}$ which produces the same Herglotz function $f$. In particular, let $\beta: B \to B(\mathcal K)$ be a faithful representation of $B$ on some Hilbert space $\mathcal K$, and write $\tilde{\alpha} = \alpha \oplus \beta,$ $\tilde{U} = U \oplus I_{\mathcal K}$, and $\tilde{R}(x) = R(P_{\widehat{\mathcal{H}}}x|_{\hat{\hilbert}})$ for $x \in B(\widehat{\mathcal{H}} \oplus \mathcal{K})$. It follows that $\tilde{\alpha}$ is faithful and
\begin{equation}\label{eq:FAITHFULADDEDherglotzrep}
f(X) = \tensor{\tilde{R}}{1_n}\left[\left(I + \tensor{\tilde{U}}{I_n} \cdot \tensor{\tilde{\alpha}}{1_n}(X)\right)\left(I - \tensor{\tilde{U}}{I_n} \cdot \tensor{\tilde{\alpha}}{1_n}(X)\right)^{-1}\right]
\end{equation}   
for $X\in \textrm{Ball}(B) \cap (B \otimes M_n(\mathbb{C}))$.
\black This brings the concrete definition of a Herglotz representation into harmony with the original \blue definition\black.

  \subsection{Operator-valued states and their associated regular Herglotz representations}

\begin{definition}
Suppose $\mathcal{A}$ is a normed unital $*$-algebra, not necessarily complete. Let $\mathcal{H}$ be a fixed Hilbert space. A linear map $\phi: \mathcal{A} \to B(\mathcal{H})$ which satisfies
\begin{description}
\item[Unitality] $$\phi(1) = 1, \hfill$$
\item[Complete positivity] For any $a_1, \ldots a_n \in \mathcal A$, $$[\phi(a_j^*a_i)]_{1 \leq i,j\leq n} \geq 0,$$
\item[Complete boundedness] For any $a_1, \ldots, a_n, b \in \mathcal A$, $$[\phi(a_j^*(||b^*b|| - b^*b)a_i)]_{1 \leq i,j\leq n} \geq 0$$
\end{description}
is called an \textbf{operator-valued state} on $\mathcal{A}$. The collection of such $\phi$ is compact in the pointwise weak topology and is denoted $\Phi_\mathcal{A}^\mathcal{H}$. \red To see compactness explicitly, note \blue that for any $M \geq 0$, the set $B_M$ of elements in $B(\mathcal H)$ with norm at most $M$ is weakly compact, \red and one can show by elementary arguments that $\norm{\phi(a)} \leq \norm{a\ad a}^{1/2}$. \blue Therefore, we may embed $\Phi_\mathcal{A}^\mathcal{H}$ into $\Pi = \prod\limits_{a \in \mathcal{A}} B_{||a^*a||^{1/2}}$, where each $B_{||a^*a||^{1/2}}$ is given the weak topology, and $\Pi$ is given the product topology. By Tychonoff's theorem, $\Pi$ is compact. An examination of the definition shows that $\Phi_\mathcal{A}^\mathcal{H}$ is closed in $\Pi$, as the set of positive matrices over $B(\mathcal{H})$ is weakly closed, so $\Phi_\mathcal{A}^\mathcal{H}$ is compact. \black Moreover, when $\mathcal{A}$ is a $C^*$-algebra, \blue any positive element of $\mathcal{A}$, such as $||b^*b|| - b^*b$, may be written as $c^*c$ for some $c \in \mathcal{A}$. In this case, the third condition follows automatically from the second. \black
\end{definition}

If $B$ is a unital $C^*$-algebra, then let $\mathcal{A}_B$ denote the formal $*$-algebra containing $B$ and an additional unitary element $u$, with no other relations. \red That is, $\mathcal{A}_B$ is the {\notred free product of $B$ and $\mathbb{C}[\mathbb Z]$ in the category of unital complex $*$-algebras. Following the embedding of $\mathbb{C}[\mathbb{Z}]$ into $C^*(\mathbb{Z})$ shows that $\mathcal{A}_B$ embeds into the unital complex $*$-algebraic free product of $B$ and $C^*(\mathbb{Z})$. Finally, the proposition on page 429 of \cite{avit} gives that the latter object may be equipped with a pre-$C^*$ norm whose completion is the unital $C^*$-algebraic free product of $B$ and $C^*(\mathbb{Z})$. Since $\mathcal{A}_B$ then embeds densely into each object discussed, we have that if $\mathcal{R}_B$ is the collection of words in $B$ which represent $0$, then 
\begin{equation*}
\overline{\mathcal{A}_B} \cong C^*(B \cup \{u\} \hspace{2pt} | \hspace{2 pt} \mathcal{R}_B \cup \{u u^* - 1, u^*u - 1\}).
\end{equation*} 
In particular, no information in $\mathcal{A}_B$ is lost by restricting attention to representations on Hilbert spaces. }

If $\phi$ is an operator-valued state on $\mathcal{A}_B$ (which will frequently be obtained via restriction from $\overline{\mathcal{A}_B}$), then by mimicking the GNS construction in this operator setting, we may endow $\mathcal{A}_B \otimes_{\mathrm{alg}} \mathcal H$ with a sesquilinear form defined by 
\begin{equation*} \langle a_1 \otimes h_1, a_2 \otimes h_2 \rangle = \langle \phi(a_2^* a_1) h_1, h_2 \rangle_{\mathcal{H}}. \end{equation*}
While this form may be degenerate, a quotient and completion produce a Hilbert space $\mathcal{H}_\phi$ into which $\mathcal{H}$ injects, as well as a unital representation $\pi_\phi: \mathcal{A}_B \to B(\mathcal{H}_\phi)$ given by
\begin{equation}
\pi_\phi(b): a \otimes h \mapsto ba \otimes h.
\end{equation}
We have abused notation somewhat, as $a \otimes h$ now represents an equivalence class. A Herglotz representable free function may then be defined by 
\begin{equation}\label{eq:repconstruction}
g_\phi(X) = \tensor{R_\phi}{1_n}\left[\left(I + \tensor{\pi_\phi(u)}{I_n} \cdot \tensor{\pi_\phi}{1_n}(X)\right)\left(I - \tensor{\pi_\phi(u)}{I_n} \cdot \tensor{\pi_\phi}{1_n}(X)\right)^{-1}\right],
\end{equation}
where $R_\phi: B(\mathcal{H}_\phi) \to B(\mathcal{H})$ is the restriction map.

  \subsection{Regular Herglotz representations as the image of operator valued states}

We denote
$$\widehat{\chi}_B^\mathcal{H} = \repregherglotzclass{B}{B(\mathcal{H})}.$$
Bridging the gap between representations (\ref{eq:herglotzrep}) and (\ref{eq:repconstruction}) shows that the set of representable regular Herglotz functions is closed.

\begin{proposition} \label{closedresult}
If $B$ and $\mathcal{H}$ are fixed, then the space $\widehat{\chi}_B^\mathcal{H}$ of representable {\notred regular} Herglotz free functions is compact, and therefore closed, in the pointwise weak operator topology.
\end{proposition}
\begin{proof}
Suppose $f \in \widehat{\chi}_B^\mathcal{H}$ is written as in (\ref{eq:herglotzrep}). Then as $\alpha: B \to B(\widehat{\mathcal{H}})$ is a unital representation and $U \in \mathcal{U}(\widehat{\mathcal{H}})$, the universal property of $\overline{\mathcal{A}_B}$ produces a unital representation $\beta: \overline{\mathcal{A}_B} \to B(\widehat{\mathcal{H}})$ which extends $\alpha$ and satisfies $\beta(u) = U$. Now, $R \circ \beta: \overline{\mathcal{A}_B} \to B(\mathcal{H})$ is a linear, completely positive, unital map on a $C^*$-algebra, so its restriction $\phi$ to $\mathcal{A}_B$ is an operator-valued state on $\mathcal{A}_B$, and $g_\phi$ can be defined as in (\ref{eq:repconstruction}). Although $f$ and $g_\phi$ pass through distinct Hilbert spaces $\widehat{\mathcal{H}}$ and $\mathcal{H}_\phi$, the two functions are equal: for $b_1, \ldots, b_n \in B$, 
\begin{align}
R_\phi[\pi_\phi(u)\pi_\phi(b_1) &\pi_\phi(u) \pi_\phi(b_2) \cdots \pi_\phi(u) \pi_\phi(b_n)] \notag \\ &= R_\phi[\pi_\phi(ub_1ub_2\cdots ub_n)] \notag \\ &= 
\phi(u b_1 u b_2 \cdots u b_n) \notag \\ &= R[\beta(u b_1 u b_2 \cdots u b_n)] \notag \\ &= R[ U \alpha(b_1) U \alpha(b_2) \cdots U \alpha(b_n)],
\end{align}
so the geometric series expansions of (\ref{eq:herglotzrep}) and (\ref{eq:repconstruction}) agree. It follows that the association $\phi \mapsto g_\phi$ is a surjection from $\Phi_{\mathcal{A}_B}^{\mathcal{H}}$ onto $\widehat{\chi}_{B}^\mathcal{H}$. It now suffices to show that $\phi \mapsto g_\phi$ is continuous in the pointwise weak topology, as $\Phi_{\mathcal{A}_B}^{\mathcal{H}}$ is compact.

Fix $\varepsilon > 0$, $\vec{h}, \vec{j} \in \mathcal{H} \otimes \mathbb{C}^n$, and $X \in \textrm{Ball}(B) \cap (B \otimes M_n(\C))$. Because $R_\phi$ is a linear contraction and $g_\phi$ may be expanded in a geometric series, it follows that for some $k$,
\begin{equation}
\left|\left| g_\phi(X) - \tensor{R_\phi}{1_n}\left[I + 2 \sum\limits_{j = 1}^k \left(\tensor{\pi_\phi(u)}{I_n} \cdot \tensor{\pi_\phi}{1_n}(X)\right)^j \right] \right| \right| < \varepsilon.
\end{equation}
That is, $g_\phi(X)$ is approximated by a matrix whose entries are sums of terms of the form $\phi(ub_1ub_2\cdots ub_p)$ of bounded word length, where $b_1, \ldots, b_p$ are chosen from the entries of $X$. If $\psi \in \Phi_{\mathcal{A}_B}^\mathcal{H}$ is chosen such that for each of these terms,
\begin{equation}
\left| \langle \phi(ub_1ub_2\cdots ub_p)h_k, j_l \rangle - \langle \psi(ub_1ub_2 \cdots ub_p)h_k, j_l \rangle \right| < \varepsilon,
\end{equation}
then $\left| \langle g_\phi(X) \vec{h}, \vec{j} \rangle - \langle g_\psi(X) \vec{h}, \vec{j} \rangle  \right|$ is bounded by a finite multiple of $\varepsilon$ which only depends on $X$. Continuity and compactness follow.
\end{proof}

{\notred A representable regular Herglotz function is defined in reference to an auxiliary Hilbert space $\widehat{\mathcal{H}}$ that includes $\mathcal{H}$. We note that while $\widehat{\chi}_B^\mathcal{H}$ is closed, the choice of $\widehat{\mathcal{H}}$ may vary with each function.} Combining the unitary $U$ and representation $\alpha$ of (\ref{eq:herglotzrep}) into one representation $\pi_\phi$ allows us to view the Herglotz representation (\ref{eq:repconstruction}) in terms of a single parameter, $\phi$. However, the choice of $\phi$ is not unique.

\begin{proposition} \label{noninjectivity}
If the unital $C^*$-algebra $B$ is not one-dimensional, then the surjection $\phi \in \Phi_{\mathcal{A}_B}^{\mathcal{H}} \mapsto g_\phi \in \widehat{\chi}_B^\mathcal{H}$ fails to be injective.
\end{proposition}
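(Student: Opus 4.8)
The plan is to exploit the fact that the Herglotz function $g_\phi$ records only a restricted family of moments of $\phi$. Expanding the geometric series in (\ref{eq:repconstruction}), every matrix entry of $g_\phi(X)$ is a convergent sum of terms of the form $\phi(ub_1ub_2\cdots ub_j)$ with $j\geq 1$ and $b_1,\dots,b_j$ drawn from the entries of $X$, together with the constant coming from $\phi(1)=I$. Hence $g_\phi$ depends only on the values $\phi(ub_1\cdots ub_j)$; in particular it never sees $\phi$ on words involving $u^*$ or on an element $b\in B$ standing alone. So it suffices to produce two distinct operator-valued states $\phi_1\neq\phi_2$ on $\mathcal{A}_B$ whose moments $\phi_i(ub_1\cdots ub_j)$ all coincide. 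I would arrange for all of these moments to vanish, so that both states map to the constant function $I\in\widehat{\chi}_B^\mathcal{H}$, and then separate $\phi_1$ from $\phi_2$ on $B$ itself.

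For the construction, since $B$ is not one-dimensional it carries two distinct states $\omega_1\neq\omega_2$: choosing a self-adjoint $a\in B$ that is not a scalar, its spectrum contains two points, and extending the corresponding characters of $C^*(a)$ by Hahn--Banach yields states taking different values at $a$. Let $(\rho_i,\mathcal{K}_i,\xi_i)$ be the GNS data of $\omega_i$, put $\widehat{\mathcal{H}}_i=\mathcal{H}\otimes\mathcal{K}_i\otimes\ell^2(\mathbb{Z})$, and embed $\mathcal{H}$ into $\widehat{\mathcal{H}}_i$ via $h\mapsto h\otimes\xi_i\otimes\delta_0$. I would define a unital $*$-representation $\pi_i$ of $\mathcal{A}_B$ by $\pi_i(b)=I_\mathcal{H}\otimes\rho_i(b)\otimes I$ and $\pi_i(u)=I\otimes I\otimes S$, where $S$ is the bilateral shift on $\ell^2(\mathbb{Z})$; this is legitimate because $\mathcal{A}_B$ is freely generated by $B$ and a single unitary, so any unital representation of $B$ together with any unitary extends. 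Setting $\phi_i=R_i\circ\pi_i\big|_{\mathcal{A}_B}$, where $R_i$ is compression to $\mathcal{H}$, the dilation argument already used in the proof of Proposition \ref{closedresult} shows that each $\phi_i$ is an operator-valued state and that its associated function is governed by the moments $\phi_i(ub_1\cdots ub_j)=R_i\,\pi_i(ub_1\cdots ub_j)$.

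It then remains to verify the two required properties. Because the tensor factors commute, $\pi_i(ub_1\cdots ub_j)=I_\mathcal{H}\otimes\rho_i(b_1\cdots b_j)\otimes S^{\,j}$, and $S^{\,j}\delta_0=\delta_j\perp\delta_0$ for $j\geq 1$; hence every moment $\phi_i(ub_1\cdots ub_j)$ vanishes and $g_{\phi_1}=g_{\phi_2}=I$. On the other hand $\phi_i(b)=\omega_i(b)\,I_\mathcal{H}$ for $b\in B$, so $\phi_1(a)\neq\phi_2(a)$ and the two operator-valued states are genuinely distinct. This gives two points of $\Phi_{\mathcal{A}_B}^{\mathcal{H}}$ with a common image, so $\phi\mapsto g_\phi$ is not injective.

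The only content-bearing step is the first one --- recognizing that $g_\phi$ is a function of the ``positive'' moments $\phi(ub_1\cdots ub_j)$ alone --- and I expect the main (mild) obstacle to be bookkeeping: confirming that the geometric-series manipulation is valid entrywise and that the tensored data really defines an operator-valued state in the sense of the definition. I also note that this is exactly where the hypothesis $B\neq\mathbb{C}$ enters: when $\dim B=1$ the algebra $B$ has a unique state, the positive moments determine all of $\phi$, and the representation is forced, in agreement with the uniqueness in the classical Theorem \ref{herglotzclassic}.
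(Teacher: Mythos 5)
Your proof is correct and follows essentially the same route as the paper: both reduce the problem to observing that $g_\phi$ sees only the moments $\phi(ub_1\cdots ub_n)$, and then construct two operator-valued states that annihilate all such words yet restrict to distinct states on $B$ (which exist because $B \neq \mathbb{C}$). Your bilateral-shift construction compressed to the $\delta_0$ slot is, under the Fourier identification $\ell^2(\mathbb{Z}) \cong L^2(\mathbb{S}^1)$, unitarily equivalent to the paper's construction, which sends $u$ to the identity character in $C(\mathbb{S}^1, B)$ and integrates a scalar state of $B$ against Haar measure on the circle.
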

\begin{proof}
Expanding the geometric series of (\ref{eq:herglotzrep}) for upper triangular matrices shows that $g_\phi$ determines $\phi(ub_1ub_2\cdots ub_n)$ for $b_1, \ldots, b_n \in B$, but such terms and their adjoints do not span all of $\mathcal{A}_B$. Suppose $B \not= \mathbb{C}$ is a unital $C^*$-algebra, so that the GNS construction produces at least two distinct states $\phi_1, \phi_2$ on $B$, which we may consider as linear maps from $B$ to $B(\mathcal{H})$ taking scalar values. Let $\gamma: \overline{\mathcal{A}_B} \to B \otimes C(\mathbb{S}^1) = C(\mathbb{S}^1, B)$ be the unique unital $*$-homomorphism with $\gamma(b) = b \otimes 1$ and $\gamma(u) = 1 \otimes \chi$, where $\chi(z) = z$ is the identity character on $\mathbb{S}^1$. \red Here $\mathbb S^1$ is the unit circle equipped with normalized arc length measure. \black Then there are two operator-valued states $\psi_1, \psi_2 \in \Phi_{\mathcal{A}_B}^\mathcal{H}$ defined by
\begin{equation} \psi_j(w) = \int_{\mathbb{S}^1} \phi_j(\gamma(w)).\end{equation}
 Note in particular that complete positivity and complete boundedness of $\psi_j$ are automatic, as $\phi_j$ is scalar-valued and $\psi_j$ is restricted from $\overline{\mathcal{A}_B}$.
Both $\psi_1$ and $\psi_2$ annihilate terms $u b_1 u b_2 \cdots u b_n$, and therefore the geometric series expansion of (\ref{eq:repconstruction}) shows that $g_\phi$ and $g_\psi$ are both the constant function $I$.
\end{proof}

\subsection{Regular Herglotz functions arising from noncommutative conditional expectations}

Suppose $\psi: \mathcal{A}_B \to B$ is a conditional expectation, so $\psi(b_1 m b_2) = b_1 \psi(m) b_2$ for $m \in \mathcal{A}_B$ and $b_1, b_2 \in B$; we denote this by $\psi \in E_{\mathcal{A}_B}^B$. Supposing $B$ has an injective representation into $B(\mathcal{H})$ (that is, $B$ is concrete), we may abuse notation somewhat and view $E_{\mathcal{A}_B}^B \subset \Phi_{\mathcal{A}_B}^\mathcal{H}$, with the intention to consider the Herglotz functions $g_\phi$ represented by these conditional expectations.

In terms of \emph{free probability}, one should think of the operator valued states which are also noncommutative conditional expectations  as distributions, as in \cite{will13}.
The ring $\mathcal{A}_B$  is essentially $B$ extended by a free unitary, that is, we have adjoined a unitary with no relations to $B.$
{\notred
In \cite{voi95} and \cite{voi00}, Voiculescu instead considered the ring resulting from the addition {\notred of} a free self-adjoint,
which is denoted by $B\langle X \rangle.$
Voiculescu and many subsequent authors have studied \dfn{(real) distributions}, maps
$\phi: B\langle X \rangle \rightarrow B$
which are noncommutative conditional expectations. Voiculescu introduced analogues of various classical integral transforms from probability and measure theory as special ``fully matricial maps", which are now understood as instances of free noncommutative functions.}
Classically, it was known that measures on the real line are in bijection with a certain class of functions which map the upper half plane to itself and satisfy good asymptotics at infinity via a Cauchy transform $\int_{\mathbb{R}} \frac{1}{t-z}d\mu(t)$ \cite{nev22}. Williams proved such a bijection exists in free probability when we restrict to the bounded case \cite{will13}.

We take the view that the subset of our so-called ``operator-valued states'' on $\mathcal{A}_B$ which are noncommutative conditional expectations corresponds to a \dfn{unitary distribution.}
That is, \red at least qualitatively, \black our work in this manuscript is to \cite{will13, 2015williams} as moment theory on the circle is to moment theory on the real line. \red We note that this analogy is witnessed by the relationship in the work of Agler \cite{ag90, aty13} between transfer function theoretic representations of Herglotz and Nevanlinna functions in two complex variables. Presumably, a similar relationship may exist here. \black

\begin{proposition}\label{prop:condclosure}
Let $B$ be a unital $C^*$-subalgebra of $B(\mathcal{H})$. The association $\phi \in E_{\mathcal{A}_B}^B \to g_\phi \in \widehat{\chi}_{B}^\mathcal{H}$ is noninjective if $B $ is not one-dimensional. Furthermore, if $B$ is weakly closed, then the set of Herglotz functions represented by conditional expectations $\phi \in E_{\mathcal{A}_B}^B$ is compact in the pointwise weak operator topology. 
\end{proposition}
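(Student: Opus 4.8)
The statement has two logically independent parts, and I would treat them in turn. For the noninjectivity, the device of Proposition \ref{noninjectivity} does not transfer directly: the states $\psi_j$ built there take scalar values and so are not conditional expectations onto $B$ once $B \neq \mathbb{C}$. Instead I would exhibit two elements of $E_{\mathcal{A}_B}^B$ that both yield the constant function $g = I$ but disagree on a word involving $u^*$. Since the geometric series expansion of \eqref{eq:repconstruction} shows that $g_\phi$ is determined solely by the values $\phi(u b_1 u b_2 \cdots u b_n)$ on the ``pure-$u$'' words, it suffices to produce distinct $\phi_1,\phi_2 \in E_{\mathcal{A}_B}^B$ that annihilate every such word (forcing $g_{\phi_1} = g_{\phi_2} = I$) yet differ elsewhere.

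The construction is a twisted shift compression. Given any unital $*$-endomorphism $\sigma$ of $B$, set $\mathcal{K} = \bigoplus_{n \in \mathbb{Z}} \mathcal{H} \cong \mathcal{H} \otimes \ell^2(\mathbb{Z})$, represent $b \in B$ as the block-diagonal operator $\pi_\sigma(b) = \bigoplus_n \sigma_n(b)$ with $\sigma_0 = \mathrm{id}$, $\sigma_1 = \sigma$, and $\sigma_n = \mathrm{id}$ otherwise, and let $U$ be the bilateral shift. By the universal property of $\overline{\mathcal{A}_B}$ this extends to a representation with $\pi_\sigma(u) = U$, and compressing to the $n = 0$ summand by the projection $e$ gives $\phi_\sigma(w) = e\,\pi_\sigma(w)\,e$. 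One checks that $\phi_\sigma$ is a unital completely positive $B$-bimodule map landing in $B$ (hence a conditional expectation), that $\phi_\sigma(u b_1 \cdots u b_n) = 0$ for $n \geq 1$ because the shift carries the distinguished summand off itself, and that $\phi_\sigma(u^* b u) = \sigma(b)$. Taking $\sigma = \mathrm{id}$ against any $\sigma \neq \mathrm{id}$ then produces the required pair. A nontrivial $\sigma$ always exists when $B \neq \mathbb{C}$: if $B$ is noncommutative it admits a non-central unitary and hence a nontrivial inner automorphism, while if $B = C(K)$ with $|K| \geq 2$ the evaluation $b \mapsto b(k_0)1$ is a nontrivial unital $*$-endomorphism.

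For the compactness I would factor through Proposition \ref{closedresult}, which already gives that $\phi \mapsto g_\phi$ is continuous on the compact space $\Phi_{\mathcal{A}_B}^\mathcal{H}$. The set in question is the image of $E_{\mathcal{A}_B}^B$ under this map, so it suffices to prove that $E_{\mathcal{A}_B}^B$ is closed, and therefore compact, in $\Phi_{\mathcal{A}_B}^\mathcal{H}$. Conditional expectations are cut out of $\Phi_{\mathcal{A}_B}^\mathcal{H}$ by two conditions: the bimodule identity $\phi(b_1 m b_2) = b_1 \phi(m) b_2$ and the range condition $\phi(\mathcal{A}_B) \subseteq B$. The bimodule identity passes to pointwise weak limits automatically, since left and right multiplication by the fixed operators $b_1, b_2$ are weak-operator continuous, so both sides of the identity converge and the relation persists in the limit. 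The range condition is the crux, and it is exactly here that weak closedness enters: if $\phi_\lambda \to \phi$ pointwise-weakly with each $\phi_\lambda(m) \in B$, then $\phi(m)$ is a weak-operator limit of a bounded net in $B$, and this limit lies in $B$ precisely because $B$ is weakly closed. Hence $E_{\mathcal{A}_B}^B$ is closed, its continuous image under $\phi \mapsto g_\phi$ is compact, and the proof concludes.

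The genuine obstacle is not a single estimate but the structural point governing the second part: because the bimodule condition is automatically a closed condition, the entire force of the hypothesis ``$B$ weakly closed'' is spent on preserving the range condition $\phi(\mathcal{A}_B) \subseteq B$ under weak limits, and the statement would fail for a merely norm-closed $B$. In the first part the corresponding subtlety is ensuring that the twisted expectation genuinely lands in $B$, so that it is a conditional expectation and not merely an operator-valued state, while simultaneously annihilating all pure-$u$ words; the block-diagonal endomorphism model is what reconciles these two demands.
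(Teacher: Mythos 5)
Your proof is correct, and its two halves relate differently to the paper's own argument. For compactness you follow the same route as the paper: $E_{\mathcal{A}_B}^B$ is closed, hence compact, inside the compact space $\Phi_{\mathcal{A}_B}^{\mathcal{H}}$, and the set of represented Herglotz functions is its image under the continuous map $\phi \mapsto g_\phi$ supplied by Proposition \ref{closedresult}; you usefully make explicit what the paper only asserts, namely that the bimodule identity survives pointwise weak limits because multiplication by fixed operators is weak-operator continuous, while the range condition $\phi(\mathcal{A}_B) \subseteq B$ is exactly where weak closedness of $B$ enters. For noninjectivity your route is genuinely different: the paper splits into two cases, using the crossed product $B \rtimes_\psi \mathbb{Z}$ with its canonical expectation when $\mathrm{Aut}(B) \neq \{1\}$, and reserving a shift/compression construction on $\mathcal{H} \otimes \ell^2(\mathbb{Z})$ (twisted by a constant-map endomorphism) only for the leftover case $B = C(X)$ with $X$ rigid. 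You observe that this second construction already works uniformly: for \emph{any} unital $*$-endomorphism $\sigma$ of $B$ — automorphism or not — the block-diagonal representation twisted by $\sigma$, the bilateral shift as the image of $u$, and compression to the distinguished summand yield a conditional expectation annihilating every word $u b_1 u b_2 \cdots u b_n$ (so $g_{\phi_\sigma} \equiv I$) while recording $\sigma$ via $\phi_\sigma(u^* b u) = \sigma(b)$; and a nontrivial such $\sigma$ exists whenever $B \neq \mathbb{C}$ (a nontrivial inner automorphism in the noncommutative case, a point evaluation $b \mapsto b(k_0)1$ in the commutative case). Your computations checking that $\phi_\sigma$ lands in $B$, is a bimodule map, and kills the pure-$u$ words are all sound. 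This buys a single unified construction, eliminating both the crossed-product machinery and the rigidity discussion (and the citation of \cite{dow88}) that the paper needs to separate the generic from the singular case. One quibble: your closing remark that the compactness statement ``would fail for a merely norm-closed $B$'' is an unproven aside — failure of $E_{\mathcal{A}_B}^B$ to be closed does not by itself show its image fails to be compact — but since nothing in the proof depends on it, this does not affect correctness.
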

\begin{proof}
{\bf Closedness.}
If $B$ is weakly closed, $E_{\mathcal{A}_B}^B \subset \Phi_{\mathcal{A}_B}^\mathcal{H}$ is closed (and therefore compact) in the pointwise weak operator topology. The collection of Herglotz functions $g_\phi$ represented by conditional expectations is a continuous image of $E_{\mathcal{A}_B}^B$ and therefore compact.

{\bf Noninjectivity in the generic case: $Aut(B) \not= \{1\}$.}

Given an automorphism $\psi$ of $B$, let $B \rtimes_\psi \mathbb{Z}$ denote the $C^*$-algebra crossed product. \blue Recall that $B \rtimes_\psi \mathbb{Z}$ is generated by $B$ and an additional unitary $\delta$ which implements the automorphism $\psi$ through conjugation. That is, $\delta$ formally satisfies the relation 
\[
 \delta b = \psi(b) \delta 
\]
for $b \in B$. For a more in-depth discussion, see \cite{dwilliams}.
\black There is a positive, unital, linear map $\alpha_\psi: B \rtimes_\psi \mathbb{Z} \to B$ defined by $\alpha_\psi( \sum b_k \delta^k) = b_0$. Precomposing $\alpha_\psi$ with a $*$-homomorphism from (the $C^*$-completion of) $\mathcal{A}_B$ to $B \rtimes_\psi \mathbb{Z}$ which fixes $B$ and sends $u$ to $\delta$ produces a conditional expectation $\widetilde{\psi} \in E_{\mathcal{A}_B}^B$. Regardless of the choice of $\psi$, $g_{\widetilde{\psi}}$ is the constant function $I$, as the terms $\widetilde{\psi}(ub_1ub_2\cdots ub_n)$ all vanish. However, $\widetilde{\psi}(ubu^*) = \psi(b)$, so if there are two distinct automorphisms $\psi_1$ and $\psi_2$ of $B$, then $\widetilde{\psi_1}$ and $\widetilde{\psi_2}$ are distinct and represent the same Herglotz function. This case applies to all noncommutative unital $C^*$-algebras, as a noncentral self-adjoint $b \in B$ produces at least one noncentral unitary $v = e^{itb}$, and conjugation by $v$ is a non-identity automorphism. On the other hand, for the commutative case $B = C(X)$, $B$ has nontrivial automorphism group if and only if $B$ is not rigid \cite{dow88}.

{\bf Noninjectivity in the singular case: $B = C(X)$ for rigid $X$.}

Let $\psi: B \to B$ be an endomorphism corresponding to a constant self-map of $X$. 
\red That is, noting that $B$ is isomorphic to $C(X)$, pick some $x \in X$ and define $\psi(b) = b(x)\cdot 1$ where $1$ is the constant function $1$. \black We define a representation of $B$
into $$\tensor{B}{B(\ell^2(\mathbb{Z}))} \subseteq
B(\mathcal{H} \overline{\otimes} \ell^2(\mathbb{Z}))$$
by 
$$\alpha(b) = \sum^{\infty}_{i=0} \tensor{\psi^{\chi_{i \neq 0}}(b)} { e_ie_i^*},$$ where $\{e_i\}_{i\in \mathbb{Z}}$ is the natural basis for $\ell^2(\mathbb{Z}),$
$\chi$ is a truth indicator function,
and $\psi^k$ represents $\psi$ iterated $k$ times.

We let $U$ be the identity tensored with the shift operator. 
Finally, define $R(A) = \tensor{I}{e_0^*}A\tensor{I}{e_0}.$
Consider
the associated operator-valued state $\widetilde{\psi}$ associated to $\alpha_\psi$, $U$, and $R.$ Since $R$ is a conditional expectation, so is $\widetilde{\psi}.$
Furthermore,
$\widetilde{\psi}(u^*bu) = \psi(b)$, and
$\widetilde{\psi}(ub_1ub_2\cdots ub_n)$ all vanish.
If $X$ is not a singleton, then repeating this construction for two different endomorphisms produces distinct representations for the function $h \equiv I.$
\end{proof}

It is unclear to the authors whether or not  $C(X)$ can be weakly closed when $X$ is a rigid space; this would say that $X$ is hyperstonean \cite{tak02}  in addition to being rigid. We should also emphasize that the noninjectivity established in Proposition \ref{prop:condclosure} implies that, in general,
free Herglotz functions arising from noncommutative conditional expectations do not determine unique noncommutative conditional expectations, which \emph{suggests that the correspondence between free function theory and free probability may be imperfect.}
{\notred Furthermore, we note that similar phenomena were noticed for the case of certain matrix-valued noncommutative Cauchy transforms of unbounded operators in} \cite[Corollary 5.2]{2015williams}.

If $\phi \in E_{\mathcal{A}_B}^B$, then evaluating $g_\phi$ at certain nilpotent matrices in $\textrm{Ball}(B)$ produces a compatibility condition that $g_\phi$ must satisfy. If $b_1, \ldots, b_n \in B$ are of small norm, then consider the application of $g_\phi$ to the nilpotent
\begin{equation}\label{magicnilpotent} 
X = \left( \begin{array}{cccccccc} 0 & b_1 \\ & 0 & b_2 \\ & & 0 & \ddots \\ & & & 0 & b_n \\ & & & & 0\end{array} \right),
\end{equation}
using the difference-differential calculus in Chapter 3 of \cite{vvw12}. Because $X$ has entries adjacent to the main diagonal, successive powers of $X$ have entries in different off-diagonals, so the geometric expansion of $g_\phi(X)$ from \eqref{eq:repconstruction} gives
\begin{equation}\label{eq:bigolmatrix}
g_\phi(X) = I_{n+1} + 2\bpm  0 & \phi(ub_1) & \phi(ub_1 ub_2)&  \cdots & \phi(ub_1 ub_2\cdots u b_n) \\ & 0 & \phi(ub_2)&  \cdots & \phi(ub_2\cdots ub_n) \\ & &\ddots \\ & & & 0 & \phi(ub_n)\\
& & & & 0 \epm.
\end{equation}
Every entry of the final column displayed in \eqref{eq:bigolmatrix} evaluates $\phi$ at a word ending in $b_n$. Because $\phi$ is a conditional expectation, if $X \in M_{n+1}(B)$ as in (\ref{magicnilpotent}) and $b \in B$ are both in the unit ball, then
\begin{equation}\label{eq:nilpotentstuff}
g_\phi(X (I_n \oplus b) ) - I_{n+1} = (g_\phi(X) - I_{n+1})(I_n \oplus b)
\end{equation}
follows. A partial converse also holds, as follows.

\begin{theorem} \label{condcond}
Suppose $B$ is a weakly closed, unital $*$-subalgebra of $B(\mathcal{H})$ and $g_\phi: \mathrm{Ball}(B) \to B(\mathcal{H})$ is a representable regular Herglotz function which satisfies (\ref{eq:nilpotentstuff}) for all $b \in B$ and $X \in M_{n+1}(B)$ as in (\ref{magicnilpotent}) with $||b|| < 1$ and $||X|| < 1$. Further, assume that the spectrum of $\pi_\phi(u)$ is a proper subset of the unit circle. Then $\phi$ is a conditional expectation.
\end{theorem}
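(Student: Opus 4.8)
The plan is to first turn the nilpotent identity (\ref{eq:nilpotentstuff}) into a purely algebraic moment relation, and then to exploit the spectral hypothesis on $\pi_\phi(u)$ to upgrade that relation to the full bimodule property. Reading off the last column of (\ref{eq:bigolmatrix}) exactly as in the derivation preceding the theorem, (\ref{eq:nilpotentstuff}) is equivalent to the relation $\phi(ub_1 u b_2\cdots ub_n b)=\phi(ub_1\cdots ub_n)b$ for all $n\geq 1$ and all $b_1,\dots,b_n,b$. Both sides are multilinear in $(b_1,\dots,b_n,b)$, so the small-norm restriction in the statement is harmless and the identity holds for all arguments in $B$; in particular, specializing $b_1=\dots=b_n=1$ yields the key consequence $\phi(u^j b)=\phi(u^j)\,b$ for every $j\geq 1$ and every $b\in B$.

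Next I would pass to the operator picture. Write $U=\pi_\phi(u)$ and recall that $\phi=R_\phi\circ\pi_\phi$ on $\mathcal A_B$, where the restriction map $R_\phi\colon B(\mathcal H_\phi)\to B(\mathcal H)$ is the compression to $\mathcal H$ and satisfies $R_\phi(I)=\mathrm{id}_{\mathcal H}$. My target is the single fact that $\phi|_B=\mathrm{id}_B$. Once this is in hand, $\phi(b^*b)=b^*b=\phi(b)^*\phi(b)$ (and likewise for $bb^*$) shows that $B$ is contained in the multiplicative domain of the unital completely positive map $\phi$, so Choi's multiplicative domain theorem gives $\phi(b_1 m b_2)=\phi(b_1)\phi(m)\phi(b_2)=b_1\phi(m)b_2$ for all $m$ and all $b_1,b_2\in B$, which is precisely the conditional expectation property.

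The crux, and the step I expect to be the main obstacle, is establishing $\phi|_B=\mathrm{id}_B$, because the relation from the first step only controls $\phi$ on words that begin with $u$ and carry at least one factor of $u$; the constant term $j=0$, which is exactly where the value $\phi(b)$ (versus $b$) is decided, is never reached directly. This is where the hypothesis $\sigma(U)\subsetneq\mathbb T$ is essential: on $\overline{\mathcal A_B}$ the element $u$ has full spectrum $\mathbb T$, so $u^*$ is not a limit of polynomials in $u$ there, but after applying $\pi_\phi$ the spectrum becomes a proper compact subset of $\mathbb T$, hence has empty interior and connected complement, so Lavrentiev's theorem makes polynomials dense in $C(\sigma(U))$. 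Writing the constant function as $1=z^2\overline z^2$ on $\sigma(U)$ and approximating $\overline z^2$ by polynomials $r_k$, the polynomials $q_k(z)=z^2 r_k(z)$ contain no term of degree below $2$ yet converge uniformly to $1$ on $\sigma(U)$, so $q_k(U)\to I$ in norm. Then, using norm-continuity of $R_\phi$ together with $R_\phi(U^j\pi_\phi(b))=\phi(u^j b)=\phi(u^j)b=R_\phi(U^j)b$ for $j\geq 1$ (which applies to every degree occurring in $q_k$), I would compute $\phi(b)=R_\phi(\pi_\phi(b))=\lim_k R_\phi(q_k(U)\pi_\phi(b))=\lim_k R_\phi(q_k(U))\,b=R_\phi(I)\,b=b$.

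This gives $\phi|_B=\mathrm{id}_B$, and the multiplicative domain argument above then completes the proof of the bimodule identity $\phi(b_1 m b_2)=b_1\phi(m)b_2$. The whole weight of the argument rests on the spectral gap allowing the identity operator to be approximated using only positive powers $U^j$ with $j\geq 2$, which is what feeds the $j\geq 1$ relation back to pin down $\phi$ on $B$; without the gap the approximation is impossible and the conclusion genuinely fails. The weak closedness of $B$ is an ambient assumption shared with the neighboring statements and is not needed for the bimodule identity itself; it serves only to situate the conclusion in the setting of conditional expectations as defined above.
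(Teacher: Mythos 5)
Your proof is correct, and it establishes exactly what the paper's own proof establishes, but the second half runs along a genuinely different track. The two arguments coincide at the start: both read the moment relation $\phi(ub_1\cdots ub_n b)=\phi(ub_1\cdots ub_n)b$ off the last column of \eqref{eq:bigolmatrix} and remove the norm restrictions by multilinearity, and both exploit the spectral hypothesis through polynomial approximation on the proper compact set $\sigma(\pi_\phi(u))\subsetneq\mathbb{T}$ (your Lavrentiev step is what the paper compresses into ``$\pi_\phi(u)^*$ is in the closed algebra generated by $\pi_\phi(u)$''). The divergence is in how that approximation is spent. The paper uses it to promote right $B$-linearity from words beginning with $u$ to \emph{all} $w\in\mathcal{A}_B$ --- and note that this implicitly requires precisely your trick of writing $I$ as a norm limit of polynomials in $\pi_\phi(u)$ with no constant term, since words lying in $B$ itself are never covered by the moment relation --- and then obtains left linearity from positivity via $\phi(b^*w^*)=(\phi(wb))^*=b^*\phi(w^*)$, combining the two. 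You instead spend the approximation only on the single degenerate case $\phi|_B=\mathrm{id}_B$, which needs nothing beyond the specialization $\phi(u^jb)=\phi(u^j)b$, and then let Choi's multiplicative domain theorem deliver the whole bimodule identity in one stroke. That invocation is legitimate here even though $\mathcal{A}_B$ is only a pre-$C^*$-algebra: $\phi=R_\phi\circ\pi_\phi$ comes with a concrete Stinespring-type dilation from the GNS construction, so the Schwarz-inequality argument behind the multiplicative domain theorem applies verbatim. Your route is more economical and isolates cleanly where the spectral gap enters; the paper's route is more self-contained (no external theorem) but asserts the passage from words starting with $u$ to general words without detailing it.

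One caveat, shared equally by both proofs: what is actually proved is the bimodule identity $\phi(b_1 m b_2)=b_1\phi(m)b_2$, whereas the paper's definition of a conditional expectation in $E_{\mathcal{A}_B}^B$ also requires $\phi$ to take values in $B$. Neither argument addresses this, and it can genuinely fail under the stated hypotheses: for $B=\mathbb{C}\cdot I\subset B(\mathbb{C}^2)$ and $\phi(f)=\mathrm{diag}(f(1),f(i))$, the function $g_\phi$ is representable and regular, condition \eqref{eq:nilpotentstuff} holds trivially because $B$ is central, and $\sigma(\pi_\phi(u))=\{1,i\}$ is a proper subset of the circle, yet $\phi(u)\notin B$. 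So your closing remark that weak closedness is merely ambient matches what both proofs actually use; the missing range condition is a defect of the statement (or of the definition it points back to), not a gap in your argument relative to the paper's.
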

\begin{proof}

The assumptions on $g_\phi$ imply that for any $b_1, \ldots, b_n, b_{n+1} \in B$, $\phi(ub_1\cdots ub_n b_{n+1}) = \phi(ub_1\cdots ub_n)b_{n+1}$, where we note that $\phi$ is linear and the $b_i$ may be scaled. Because the spectrum of $\pi_\phi(u)$ is a proper subset of the unit circle, $\pi_\phi(u)^*$ is in the closed algebra generated by $\pi_\phi(u)$, and $\phi(w b) = \phi(w) b$ holds for all $b \in B$ and $w \in \mathcal{A}_B$. Now, $\phi$ is positive and therefore preserves adjoints, so $\phi(b^* w^*) = b^* \phi(w^*)$. Together, these properties show that $\phi$ itself is a conditional expection. 
\end{proof}

We caution that our proof of Theorem \ref{condcond} relies heavily on our assumption that
the spectrum of $\pi_\phi(u)$ is a proper subset of the unit circle. Up to some M\"obius \blue transformations, \black that assumption corresponds to the compactly supported case from Williams \cite{will13}. \emph{A general identification of the set of free  functions which should arise from free probability remains unclear.}

\section{The construction of the Herglotz representation}


\subsection{The Agler model theory for noncommutative functions of Ball, Marx and Vinnikov}

The Cayley transform between the disk and the right half plane given by
	$$z = \frac{x + 1}{x - 1}, \hspace{.1in} x = \frac{1 - z}{1 + z}$$
extends to provide a one-to-one correspondence between free Herglotz functions and \dfn{free Schur functions}, that is, free functions $f: \BallB{B} \to \BallB{\BH}$.

Let $\delta: \MU{B_1} \rightarrow  \MU{B_2}$ {\notred be a free function.} We define the set of 
\dfn{$\delta$-contractions} to be
	$$G_\delta = \{X \in \MU{B_1} | \hspace{2pt} \|\delta(X)\|<1\}.$$

We recall the definition of a model, following the ``linear forms" approach from \cite{pastd14}.
\begin{definition}
Let $f: G_\delta \to \BallB{\BH}$. \red Let $S \subset G_\delta$ be a set closed under direct sums. \black
A \dfn{model} for $f$ on $S$ {\notred consists of a Hilbert space $\hilbert$ and a graded function $u$} such that 
	$$T - f(X)\ad T f(X) = u(X)\ad [T - \tensor{I_\hilbert}{\delta(X)\ad T \delta(X)}] u(X),$$
where $X \in {\notred S \cap M_n(B)}$ and $T \in M_n.$
\end{definition}

We also recall the following theorem on models which was proven by Agler, {\mc}Carthy \cite{agmc_gh} in the special case of polynomially convex sets and by Ball, Marx and Vinnikov \cite[Corollary 3.2]{BMV16} in general.
\begin{theorem}[Ball, Marx and Vinnikov \cite{BMV16}]\label{ncschurmodel} 
Let $B_1, B_2$ be $C^*$-algebras.
Let $\delta$ be a free function.
Let $f:  G_\delta \to \BallB{\BH}$ be a free function. Then $f$ has a model \red on $G_\delta$ \black.
\end{theorem}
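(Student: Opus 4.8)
The plan is to prove the existence of a model along the standard two-step route for Agler-type realization theorems: first establish, via a Hahn--Banach cone-separation argument, that the hereditary kernel attached to $f$ is positive relative to $\delta$ (i.e.\ admits an Agler decomposition), and then extract the graded function $u$ by a lurking-isometry / Kolmogorov decomposition. Following the ``linear forms'' reformulation of \cite{pastd14}, I would first rewrite the model equation in its polarized two-point form: a model is a graded $u$ with
$$T - f(X)\ad T f(Y) = u(X)\ad\left[T - \tensor{I_\hilbert}{\delta(X)\ad T \delta(Y)}\right] u(Y)$$
for all $n$, all $X,Y \in \MU{B}_n$, and all $T \in M_n$. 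Rearranging this as
$$f(X)\ad T f(Y) + u(X)\ad T u(Y) = T + u(X)\ad \left(\tensor{I_\hilbert}{\delta(X)\ad T \delta(Y)}\right) u(Y)$$
displays the two sides as $T$-weighted Gram matrices of the families $\bigl(f(Y),u(Y)\bigr)$ and $\bigl(I,(\tensor{I_\hilbert}{\delta(Y)})u(Y)\bigr)$. When these Grams coincide there is a lurking isometry carrying one family to the other, and solving for $u$ produces the model. Thus the entire content is the positivity statement that permits introducing $u$ in the first place.

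Next I would set up the cone. Let $\mathcal{C}$ be the closed cone, in an appropriate locally convex topology on graded kernels over $G_\delta$, generated by the $\delta$-contractivity data, namely expressions $v(X)\ad[T - \tensor{I_\hilbert}{\delta(X)\ad T \delta(X)}]v(X)$ together with the admissible completely positive maps. The claim to be proven is that the hereditary function $(X,T) \mapsto T - f(X)\ad T f(X)$ lies in $\mathcal{C}$. Suppose not; then Hahn--Banach separates it from $\mathcal{C}$ by a continuous real linear functional $L$, normalized so that $L \geq 0$ on $\mathcal{C}$ while $L\bigl(T - f(X)\ad T f(X)\bigr) < 0$ at some point.

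Finally I would run a GNS-type construction on $L$. Positivity of $L$ on the cone makes $L$ behave like a state, so GNS yields a Hilbert space, a representation of $B$, and a point $X_0$ assembled from the separating data at which $\delta(X_0)$ is a contraction, hence $X_0 \in G_\delta$, while the strict inequality forces $\|f(X_0)\| > 1$. This contradicts $f: G_\delta \to \BallB{\BH}$, so the kernel is positive and the model exists.

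The hard part will be the functional analysis of the cone rather than the algebra. One must choose a topology fine enough that $\mathcal{C}$ is genuinely closed, so that Hahn--Banach applies, yet coarse enough that its continuous dual is tractable, and one must verify that the GNS output respects the graded structure and the intertwining relations, producing an honest point of $G_\delta$ at a finite matrix level rather than a merely formal object. This is precisely the technical core carried out in full generality in \cite[Corollary 3.2]{BMV16}, where the admissibility of $\delta$ and the passage between the scalar and operator-valued levels demand the most care.
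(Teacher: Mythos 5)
First, a structural point: the paper does not prove this statement at all. Theorem \ref{ncschurmodel} is imported as a black box from Ball--Marx--Vinnikov \cite[Corollary 3.2]{BMV16}, with the polynomially convex special case credited to Agler--McCarthy \cite{agmc_gh}; there is no internal proof to compare your argument against. Your outline does match the genre of proof used in that literature (cone positivity via Hahn--Banach separation, a GNS construction on the separating functional, then a lurking isometry / Kolmogorov factorization to extract $u$), so as a reading guide to where the theorem comes from it is sensible.

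Measured as a proof, however, it has a genuine gap: every load-bearing step is deferred, and deferred explicitly to \cite{BMV16} itself --- the very result being proven --- which makes the argument circular. Concretely: (i) you never specify the locally convex space of graded hereditary kernels over $G_\delta$ nor prove that the cone $\mathcal{C}$ is closed there, and this is exactly where such arguments live or die --- in the commutative polydisk setting the same cone-separation scheme only captures the Schur--Agler class, which for three or more variables is \emph{strictly} smaller than the Schur class, so the scheme alone cannot yield the theorem as stated; (ii) you never identify where freeness of $f$ (respect for direct sums and intertwining maps) enters, yet that is precisely the extra structure that makes \emph{every} contractive free function on $G_\delta$ admit a model, in contrast to the commutative case; (iii) the GNS step naturally produces an operator-level point on an infinite-dimensional space satisfying $\|\delta(X_0)\| \leq 1$ rather than $\|\delta(X_0)\| < 1$, so turning the separating functional into an honest point of $G_\delta$ at which $\|f(X_0)\| > 1$ requires either an extension of $f$ and $\delta$ to operator levels or a scaling/compactness argument returning to a finite matrix level --- none of which is sketched. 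Writing that this is ``precisely the technical core carried out in \cite[Corollary 3.2]{BMV16}'' concedes that the proposal is a plan for a proof rather than a proof; in the context of this paper, the correct treatment of the statement is simply to cite it, as the authors do.
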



\subsection{Lurking isometry argument construction of Herglotz representation on a polynomially
convex set for Herglotz functions with a model}

\begin{lemma}\label{blockherg}
Suppose $\delta(0)=0.$ Let $S$ be a \red set contained in $G_\delta$ closed under direct sums and containing $0.$ \black
Suppose $h: S \rightarrow \RHPB{\BH}$ has
$h(0)=I$.
If the Cayley transform of $h$ has a model,
then there exists:
\begin{enumerate}
\item A $C^*$-algebra $M$ unitally containing $B,$
\item A completely positive linear map $R: M \rightarrow \BH,$
\item A unitary $U\in M,$
\end{enumerate}
such that
\beq \label{herglotz} h(X) = \tensor{R}{\text{id}_n} \left[\left( I - \tensor{U}{I_n}\delta(X)\right)\left(I + \tensor{U}{I_n}\delta(X)\right)^{-1}\right],\eeq 
which, additionally, allows $h$ to be continued to a free function
$h: G_{\delta} \rightarrow \RHPB{\BH}.$
Moreover, if $h$ has a representation, then $h$ has a model.
\end{lemma}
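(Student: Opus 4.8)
### The plan

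\textbf{The plan} is to prove Lemma \ref{blockherg} by combining the Cayley transform with a lurking isometry argument applied to the model guaranteed by Theorem \ref{ncschurmodel}. Let $f$ denote the Cayley transform of $h$, so that $f: S \to \BallB{\BH}$ is a free Schur function with a model. The final statement (``if $h$ has a representation, then $h$ has a model'') is the easy converse, and I would dispatch it first by a direct computation: starting from the representation \eqref{herglotz}, apply the inverse Cayley transform to produce an explicit formula for $f$, and then verify the model identity $T - f(X)\ad T f(X) = u(Y)\ad[T - \tensor{I_\hilbert}{\delta(X)\ad T \delta(X)}]u(X)$ by reading off the Hilbert space $\hilbert = M$ (or its GNS space under $R$) and the graded function $u$ from the geometric series. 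The complete positivity of $R$ is exactly what makes $T - f\ad T f$ factor through a positive kernel, so this direction amounts to unwinding the definitions.

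\textbf{The main construction} (existence of $M$, $R$, $U$) is the substantive direction, and I would organize it as a lurking isometry argument. Rewriting the model identity as
\begin{equation*}
T + u(X)\ad \left[\tensor{I_\hilbert}{\delta(X)\ad T \delta(X)}\right] u(X) = f(X)\ad T f(X) + u(X)\ad T u(X),
\end{equation*}
I would match the two sides as Gram matrices of two tuples of vectors. This produces an isometry $V$ from the span of one family onto the span of the other, intertwining the data $(f(X), u(X))$ with $(\text{I}, \tensor{I_\hilbert}{\delta(X)}\, u(X))$. Because $S$ is a free set containing $0$, gradedness and the intertwining property of free functions let me assemble these vectors coherently across all matrix levels, so that $V$ extends to a well-defined isometry on a single Hilbert space; enlarging the space if necessary, I extend $V$ to a \emph{unitary} $U$, which accounts for the requirement $U \in \mathcal{U}(\widehat{\mathcal{H}})$ and simultaneously produces the ambient $C^*$-algebra $M$ and the completely positive restriction map $R$. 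Solving the intertwining relations for $f(X)$ then yields a transfer-function (colligation) formula $f(X) = \tensor{R}{\text{id}_n}[\,\cdots\,]$, and applying the Cayley transform back to $h$ converts this into precisely \eqref{herglotz}.

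\textbf{The normalization and continuation} are then handled as follows. The hypotheses $\delta(0)=0$ and $h(0)=I$ force $f(0)=0$, which fixes the block structure of the colligation so that the constant term matches $h(0)=I$ after the Cayley transform; I would verify that $R$ comes out unital (or normalize it to be so). Finally, since the right-hand side of \eqref{herglotz} is defined and analytic on all of $G_\delta$ — the resolvent $(I + \tensor{U}{I_n}\delta(X))^{-1}$ exists precisely because $\|\delta(X)\| < 1$ and $U$ is unitary, so $\tensor{U}{I_n}\delta(X)$ is a strict contraction — the formula itself furnishes the claimed free-function continuation of $h$ from $S$ to $G_\delta$.

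\textbf{The main obstacle} I expect is the coherent extension of the lurking isometry across all grades into a single well-defined, and then unitary, operator: one must check that the isometry built from the model data respects the direct-sum and intertwining structure of the free set $S$ (so that it does not depend on the chosen point $X$ or matrix level $n$), and that the deficiency spaces can be padded to make $V$ unitary without disturbing the transfer-function identity. The positivity bookkeeping that guarantees $R$ is completely positive and unital, rather than merely bounded, is the second delicate point, and is where the completely-positive hypothesis on the model pays off.
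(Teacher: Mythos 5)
Your overall skeleton --- Cayley transform, invoke Theorem \ref{ncschurmodel}, set up a Gramian, apply a lurking isometry --- is the same as the paper's, and your rearrangement of the model identity correctly produces a unitary colligation $L = \bbm A & B \\ C & D \ebm$ satisfying
\begin{equation*}
\tensor{L}{I_n}\vectwo{I}{\tensor{I_\hilbert}{\delta(X)}u(X)} = \vectwo{f(X)}{u(X)},
\end{equation*}
hence the transfer-function realization $f(X) = A + B\,\delta(X)\left(I - D\delta(X)\right)\inv C$ (tensors suppressed), with $A = f(0) = 0$. But there is a genuine gap at the decisive step. The unitary $U$ appearing in \eqref{herglotz} is \emph{not} the unitary extension of the lurking isometry, as your proposal asserts when you say that extending $V$ to a unitary ``accounts for the requirement $U \in \mathcal{U}(\widehat{\mathcal{H}})$'': the extended colligation $L$ acts on the direct sum of the coefficient space and the model space and enters the formula for $f$ sandwiched among three different blocks $B$, $C$, $D$, whereas \eqref{herglotz} demands a \emph{single} unitary inside the resolvent with a unital completely positive compression outside. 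Indeed, $f$ itself can never have the Herglotz form $\tensor{R}{1_n}\left[\left(I + \tensor{U}{I_n}\delta(X)\right)\left(I - \tensor{U}{I_n}\delta(X)\right)\inv\right]$ with $R$ completely positive unless $f \equiv 0$, since any such expression has positive semidefinite real part while $f(0)=0$. So the sentence ``applying the Cayley transform back to $h$ converts this into precisely \eqref{herglotz}'' conceals exactly the computation that constitutes the theorem, and nothing in your outline supplies it.

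What fills this gap in the paper is Lemma \ref{herglotzunitary} (due to Agler): if $L = \bbm A & B \\ C & D \ebm$ is unitary and $1 \notin \sigma(A)$, then $U = D - C(1+A)\inv B$ is again unitary; moreover $V = C(I+A)\inv$ is an isometry (here $A = 0$, so $U = D - CB$ and $V = C$, and $V\ad V = I$ follows from the $(1,1)$ entry of $L\ad L = I$). An elimination computation --- the paper eliminates $h$ between the two block rows of the colligation relations and then evaluates the model at a suitable $2\times 2$ pair $(T, \hat X)$ --- then yields
\begin{equation*}
h(X) = \tensor{V\ad}{I_n}\left[\left(I + \tensor{U}{I_n}\tensor{I_\hilbert}{\delta(X)}\right)\left(I - \tensor{U}{I_n}\tensor{I_\hilbert}{\delta(X)}\right)\inv\right]\tensor{V}{I_n},
\end{equation*}
so that $R(a) = V\ad a V$ is the required unital completely positive map and $M$ can be taken to be the $C^*$-algebra generated by $U$ and (the image of) $B$. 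The paper organizes the Gramian in terms of $h$ rather than $f$, by multiplying the model through by $(h(X)+I)\ad$ and $(h(X)+I)$ and setting $v = u\cdot(h+I)$, but the resulting lurking isometry is the same $L$ as yours; the essential content you are missing is the Schur-complement-type Lemma \ref{herglotzunitary} and the elimination that isolates $U$ and $V$, not the choice of Gramian. The remaining parts of your outline (the converse by reversing the computation, and the continuation to $G_\delta$ because $\tensor{U}{I_n}\delta(X)$ is a strict contraction there) match the paper and are fine.
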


\begin{proof}

Following Agler \cite{ag90}, we use a standard lurking isometry argument.

	$\Rightarrow:$ Suppose that $h:  S \to \RHPB{\BH}$ is a free Herglotz function with $h(0) = I$. Define a free Schur function $f: S \to  \BallB{\BH}$ by the Cayley transform	
	$$f(X) = (h(X) - I)(h(X) + I)\inv.$$
		By Theorem \ref{ncschurmodel}, $f$ has a model given by 
	\[
		T - f(X)\ad T f(X) = u(X)\ad [T - \tensor{I_\hilbert}{\delta(X)\ad T \delta(X)}] u(X),
	\] 
	which after substitution becomes
	\[
		T - [(h(X) - I)(h(X) + I)\inv ]\ad T(h(X) - I)(h(X) + I)\inv
		= u(X)\ad [T - \tensor{I_\hilbert}{\delta(X)\ad T \delta(X)}] u(X).
	\]
	Multiplying on the left and right by $(h(Y) + I)\ad$ and $(h(X) + I)$ respectively, we get
	\begin{align*}
		(h(X) + I)\ad T &(h(X) + I) - (h(X) - I)\ad T (h(X) - I) \\  
		&= \left[u(X)(h(X) - I)\right]\ad \left[T - \tensor{I_\hilbert}{\delta(X)\ad T \delta(X)}\right] \left[u(X) (h(X) + I)\right].
	\end{align*}
Let $v(X) = u(X)(h(X) + I)$. Upon substitution and rearrangement, the above equation becomes
\begin{align}
	(h(X) + I)\ad T(h(X) + I) + &v(X)\ad \tensor{I_\hilbert}{\delta(X)\ad T \delta(X)} v(X) \notag \\
	&= (h(X) - I)\ad T(h(X) -I) + v(X)\ad \tensor{I_\hilbert}{T} v(X). \label{sorted}
\end{align}
This expression can be rewritten as a Gramian by
\beq \label{gramian}
	\vectwo{h(X) + I}{\tensor{I_\hilbert}{\delta(X)}v(X)}\ad T \vectwo{h(X) + I}{\tensor{I_\hilbert}{X}v(X)} = \vectwo{h(X) - I}{v(X)}\ad T \vectwo{h(X) - I}{v(X)}.  
\eeq
 
Define 
	$$\phi(X) = \vectwo{h(X) + I}{\tensor{I_\hilbert}{\delta(X)}v(X)}, \text{ and } \theta(X) = \vectwo{h(X) - I}{v(X)}$$
and note that by a standard lurking isometry theorem (see \cite{pastd14}),  there exists a unitary $L$ such that
	$$\tensor{L}{I}\phi(X) = \theta(X).$$
$L$ can be written in block form as
\[
L = \begin{bmatrix} A & B \\ C & D \end{bmatrix}  \\
\]
so that 
\[
\bbm \tensor{A}{I_n} & \tensor{B}{I_n} \\ \tensor{C}{I_n} & \tensor{D}{I_n} \ebm \vectwo{h(X) + I}{\tensor{I_\hilbert}{\delta(X)} v(X)} = \vectwo{h(X) - I}{v(X)}.
\]
Multiplying through gives the relations
\begin{align*}
&\tensor{A}{I_n} (h(X) + I) + \tensor{B}{I_n} \tensor{I_\hilbert}{\delta(X)}v(X) =  (h(X) - I), \\
&\tensor{C}{I_n} (h(X) + I) + \tensor{D}{I_n} \tensor{I_\hilbert}{\delta(X)}v(X) =  v(X).
\end{align*}
Arranging this system to facilitate elimination of $h$, we get
\begin{align*}
\tensor{I + A}{I_n} &h(X) + \tensor{B}{I_n}\tensor{I_\hilbert}{\delta(X)} v(X) = \tensor{I - A}{I_n} \\
\tensor{C}{I_n}&h(X) + \left(\tensor{D}{I_n}\tensor{I_\hilbert}{\delta(X)} - I\right)v(X) = \tensor{-C}{I_n}.
\end{align*}
Multiplying through the first equation by $\tensor{(I + A)\inv}{I}$ and then $\tensor{C}{I}$ gives the system
\begin{align*}
\tensor{C}{I_n} &h(X) + \tensor{C(I + A)\inv B}{I_n}\tensor{I_\hilbert}{\delta(X)}v(X) = \tensor{C(I+A)\inv (I - A)}{I_n} \\
\tensor{C}{I_n}&h(X) + \left(\tensor{D}{I_n}\tensor{I_\hilbert}{\delta(X)} - I\right)v(X) = \tensor{-C}{I_n}.
\end{align*}
Subtracting the equations to eliminate $h(x)$ gives
\beq \label{prelemma}
\left[I - \left(\tensor{D}{I_n} - \tensor{C(I+A)\inv B}{I_n}\right)\tensor{I_\hilbert}{\delta(X)}\right]v(X) = \tensor{C(I + A)\inv (I - A) + I}{I_n}.
\eeq

We now need the following lemma.

\begin{lemma}[\cite{ag90}] \label{herglotzunitary}
	Let $\hilbert_1$ and $\hilbert_2$ be Hilbert spaces, let $L$ be a bounded operator on $\hilbert_1 \oplus \hilbert_2$ with block form
	\[
		L = \bbm A & B \\ C & D \ebm,
	\]
	assume that $1 \notin \sigma(A)$, and let $U = D - C(1+A)\inv B$. If $L$ is an isometry, then so is $U$, and if $L$ is unitary, then so is $U$. 
\end{lemma}

Since $h(0) = I$ implies that $A = 0$ and thus that $1\notin \sigma(A)$, we apply Lemma \ref{herglotzunitary} to \eqref{prelemma}. Consequently, the operator $U = D - C(1+A)\inv B$ is unitary, which allows the simplification of Equation \eqref{prelemma} to
\[
\left(I - \tensor{U}{I_n}\tensor{I_\hilbert}{\delta(X)}\right)v(X) = \tensor{2C(I+A)\inv}{I_n}.
\]
Let $V = C(I+A)\inv$. Then we get the following expression for $v(X)$, where the inverse operator exists as $X$ is a strict contraction:
\beq \label{sub1}
v(X) = \left(I - \tensor{U}{I_n}\tensor{I_\hilbert}{\delta(X)}\right)\inv \tensor{2V}{I_n}.
\eeq
Another useful form of this equation is
\beq \label{sub2}
v(X) = \tensor{U}{I_n}\tensor{I_\hilbert}{\delta(X)} v(X) + \tensor{2V}{I_n}.
\eeq

By evaluating the model at 
$$T = \bbm 0 & 1 \\ 1 &0 \ebm, \hat{X} = \bbm 0 & 0 \\ 0 & X \ebm,$$
one obtains that \eqref{sorted} can be rewritten as 
\beq \label{simplesorted}
2(h(0)\ad + h(X)) =  v(0)\ad v(X) - \left(\tensor{I_\hilbert}{\delta(0)}v(0)\right)\ad\tensor{I_\hilbert}{\delta(X)}v(X).
\eeq
Substituting \eqref{sub2} into \eqref{simplesorted}, we have 
\beq \label{almostrep}
2(h(0)\ad + h(X)) = \left(\tensor{U}{I_n}\tensor{I_\hilbert}{\delta(0)}v(0)\right)\ad\tensor{2V}{I_n} + \tensor{2V\ad U}{I_n}\tensor{I_\hilbert}{\delta(X)}v(X) + \tensor{2V\ad}{I_n}\tensor{2V}{I_n}.
\eeq

As $h(0) = I$, evaluating $X = 0$ produces
\[
4 = \tensor{2V\ad}{I_n} \tensor{2V}{I_n},
\]
and therefore $V$ is an isometry.
Now set $Y = 0$ in \eqref{almostrep} and divide by $2$, which gives
\[
	h(X) = \tensor{V\ad}{I}\tensor{U}{I}\tensor{I_\hilbert}{\delta(X)}v(X) + I.
\]
By substitution of Equation \eqref{sub1} for $v(X)$,
\begin{align*}
h(X) &= \tensor{V\ad}{I_n}\tensor{U}{I_n}\tensor{I_\hilbert}{\delta(X)}\left(I - \tensor{U}{I_n}\tensor{I_\hilbert}{X}\right)\inv \tensor{2V}{I_n} + I \\
&= \tensor{V\ad}{I_n}\left[2\tensor{U}{I_n}\tensor{I_\hilbert}{\delta(X)}\left(I - \tensor{U}{I_n}\tensor{I_\hilbert}{\delta(X)}\right)\inv + I\right]\tensor{V}{I_n} \\
&= \tensor{V\ad}{I_n}\left[\left(I + \tensor{U}{I_n}\tensor{I_\hilbert}{\delta(X)}\right)\left(I - \tensor{U}{I_n}\tensor{I_\hilbert}{\delta(X)}\right)\inv\right]\tensor{V}{I_n}.
\end{align*}

\black
So, there exists
a Hilbert space $\mathcal{K},$ an isometry $V: \mathcal{K} \rightarrow \hilbert$, and a unitary $U$
such that
$$h(X)= \tensor{V\ad}{I_n}\left[\left(I + \tensor{U}{I_n}\tensor{I_\hilbert}{\delta(X)}\right)\left(I - \tensor{U}{I_n}\tensor{I_\hilbert}{\delta(X)}\right)\inv\right]\tensor{V}{I_n}.$$
Finally, we let $M$ be the $C^*$-algebra generated by $U$ and $B$ inside $\BH \otimes B,$
and define $R(a) = V^*aV$, completing the argument. (Notably, we are viewing $B$ as a unital subalgebra
of $M$ via the map $b \rightarrow \tensor{I_{\hilbert}}{b}$.)

$(\Rightarrow)$ The converse follows from the observation that the calculation above is reversible.

\end{proof}

Now, the result immediately implies Theorem \ref{exacthasrep}.

For a nonconstant free Herglotz function $h$, $h(0) = S + iT$ where $S \geq 0$, and

	$$\hat{h}(X) = \tensor{S^{-1/2}}{I_n} \left(-i\tensor{T}{I_n} + h(X)\right)\tensor{S^{-1/2}}{I_n}$$
is a free Herglotz function with $\hat{h}(0) = I$. \red Here, when $\ker S \neq \{0\}$, we define $S^{-1/2}$ to be $0$ on $\ker S$ and $(P_{\ker S^\perp} S |_{\ker S^\perp})^{-1/2}$ on $\ker S^\perp$. \black (If $S$ has a kernel or $S^{-1/2}$ is unbounded,
an additional normal families argument is required. We leave this to the interested reader.)
This normalization allows the following corollary.

\begin{corollary} \label{finalcor}
Let $h: \BallB{B_1} \rightarrow \RHPB{B_2}$.
Then there exists:
\begin{enumerate}
	\item A $C^*$-algebra $M$ unitally containing $B,$
	\item	A completely positive linear (not necessarily unital) map $R: M \rightarrow B_2,$ 
	\item A unitary $U \in M,$
	\item A bounded self-adjoint operator $T,$
\end{enumerate}
such that 
\beq \label{herglotzfinal} 
h(X) =\tensor{ iT}{I_n} + \tensor{R}{1_n}
\left[\left( I + \tensor{U}{I_n}X\right)\left(I - \tensor{U}{I_n}X\right)^{-1}\right].
\eeq
\end{corollary}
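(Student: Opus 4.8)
The plan is to reduce the general statement to the already-established regular case (Theorem~\ref{exacthasrep}) by a linear normalization of the target value at the origin, exactly as foreshadowed in the paragraph preceding the corollary. First I would record the real/imaginary decomposition: since $h(0)\in\RHPB{B_2}$, write $h(0)=S+iT$ with $S=\RE h(0)\geq 0$ and $T=\IM h(0)$ self-adjoint in $B_2$. Assuming for the moment that $S$ is invertible with bounded $S^{-1/2}$, set
\[
\hat h(X)=\tensor{S^{-1/2}}{I_n}\left(-i\tensor{T}{I_n}+h(X)\right)\tensor{S^{-1/2}}{I_n},
\]
so that $\hat h(0)=S^{-1/2}SS^{-1/2}=I$.

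Next I would verify that $\hat h$ is again a regular free Herglotz function. It is clearly graded, and conjugation by the self-adjoint $\tensor{S^{-1/2}}{I_n}$ together with subtraction of the self-adjoint $\tensor{T}{I_n}$ cancels in the real part, giving $\RE\hat h(X)=\tensor{S^{-1/2}}{I_n}(\RE h(X))\tensor{S^{-1/2}}{I_n}\geq 0$; the free (intertwining) property survives because $\tensor{S^{-1/2}}{I_n}$ and $\tensor{T}{I_n}$ pass through scalar intertwiners (that is, $\tensor{S^{-1/2}}{I_m}\Gamma=\Gamma\tensor{S^{-1/2}}{I_n}$ for scalar $\Gamma$, since $S^{-1/2}$ commutes with scalars). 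Applying Theorem~\ref{exacthasrep} to $\hat h$ yields a $C^*$-algebra $M\supseteq B_1$, a unital completely positive $\hat R\colon M\to B_2$, and a unitary $U\in M$ with
\[
\hat h(X)=\tensor{\hat R}{1_n}\left[\left(I+\tensor{U}{I_n}X\right)\left(I-\tensor{U}{I_n}X\right)\inv\right].
\]
Undoing the normalization—conjugating by $\tensor{S^{1/2}}{I_n}$ and adding back $\tensor{iT}{I_n}$—produces \eqref{herglotzfinal} once I set $R(a)=S^{1/2}\hat R(a)S^{1/2}$: this $R$ is completely positive as a composition of $\hat R$ with conjugation by $S^{1/2}$, it satisfies $R(1)=S$, and it is generally not unital, matching item~(2). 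The identity $\tensor{S^{1/2}}{I_n}\tensor{\hat R}{1_n}[\,\cdot\,]\tensor{S^{1/2}}{I_n}=\tensor{R}{1_n}[\,\cdot\,]$ is immediate on elementary tensors and extends by linearity, so the inner Cayley kernel is unchanged.

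The main obstacle is the degenerate case the text flags and defers: $S=\RE h(0)$ may fail to be invertible, or $S^{-1/2}$ may be unbounded, so that $\hat h$ is undefined. Here I would perturb and pass to a limit. For $\eps>0$ the function $h_\eps=h+\eps I$ is still free Herglotz, now with $\RE h_\eps\geq\eps I>0$, so $S_\eps=S+\eps$ is boundedly invertible and the argument above applies: $\hat h_\eps$ is regular Herglotz, hence equals $g_{\hat\phi_\eps}$ for some operator-valued state $\hat\phi_\eps\in\Phi_{\mathcal A_{B_1}}^{\mathcal H}$ with unital completely positive map $\hat R_\eps$, and
\[
h_\eps(X)=\tensor{iT}{I_n}+\tensor{S_\eps^{1/2}}{I_n}\,g_{\hat\phi_\eps}(X)\,\tensor{S_\eps^{1/2}}{I_n}.
\]
Since $\Phi_{\mathcal A_{B_1}}^{\mathcal H}$ is compact—passing to the enveloping von Neumann algebra of $B_2$ if needed, as sanctioned by the remark after Theorem~\ref{introprop:condclosure}—I extract a weak-$*$ convergent subnet $\hat\phi_\eps\to\hat\phi$, and by the continuity established in Proposition~\ref{closedresult} the associated functions converge, $g_{\hat\phi_\eps}\to g_{\hat\phi}$. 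Combining this with the norm-convergence $S_\eps^{1/2}\to S^{1/2}$ and $h_\eps\to h$, the displayed relation passes to the limit to give $h(X)=\tensor{iT}{I_n}+\tensor{S^{1/2}}{I_n}\,g_{\hat\phi}(X)\,\tensor{S^{1/2}}{I_n}$; setting $R(a)=S^{1/2}\hat R(a)S^{1/2}$ with $\hat R$ the unital completely positive map of $\hat\phi$ recovers \eqref{herglotzfinal}. The one nonroutine point to pin down is precisely that the perturbation $\eps I$ and the resulting non-unitality wash out in the limit: this happens because $g_{\hat\phi}$ absorbs all the limiting data while the spurious $\eps I$ is carried off by $h_\eps\to h$, so no residual constant survives. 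These are exactly the ``normal families'' details the authors leave to the reader.
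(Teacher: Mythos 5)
Your argument for the main case is exactly the paper's: the paper's entire proof of Corollary \ref{finalcor} is the normalization $\hat h(X)=\tensor{S^{-1/2}}{I_n}\left(-i\tensor{T}{I_n}+h(X)\right)\tensor{S^{-1/2}}{I_n}$ in the preceding paragraph, followed by an application of Theorem \ref{exacthasrep} and undoing the conjugation, with the degenerate case ($S$ with kernel or $S^{-1/2}$ unbounded) explicitly left ``to the interested reader.'' Your verification that $R(a)=S^{1/2}\hat R(a)S^{1/2}$ is completely positive, non-unital, with $R(1)=S$, is the intended completion of that step, and it is correct.

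The treatment of the degenerate case is your genuinely new content, and it is close to working but has two points that need attention. First, a small one: to pass $\tensor{S_\eps^{1/2}}{I_n}\,g_{\hat\phi_\eps}(X)\,\tensor{S_\eps^{1/2}}{I_n}$ to a weak limit against the norm-convergent factors $S_\eps^{1/2}\to S^{1/2}$, you need the net $g_{\hat\phi_\eps}(X)$ to be uniformly bounded (weak convergence of a net alone does not give this, and products do not otherwise converge). This is easily supplied --- unital completely positive maps are completely contractive, so $\|g_{\hat\phi_\eps}(X)\|\le(1+\|X\|)(1-\|X\|)^{-1}$ --- but it must be said, since it is precisely what lets the blow-up of $S_\eps^{-1/2}$ on $\ker S$ be absorbed. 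Second, and more substantively: the maps $\hat\phi_\eps$ take values in $B_2$, but their pointwise weak limit $\hat\phi$ only takes values in the weak operator closure of $B_2$ in $B(\mathcal H)$, i.e.\ in the von Neumann algebra $B_2''$. Hence your limiting $R=S^{1/2}R_{\hat\phi}(\,\cdot\,)S^{1/2}$ is a completely positive map into $B_2''$, not into $B_2$ as the corollary states. You invoke the remark after Theorem \ref{introprop:condclosure} to justify compactness, but that remark only identifies topologies under a unital inclusion $B_2\hookrightarrow C$; it does not return the limit to $B_2$. So your argument proves the degenerate case of the corollary with codomain enlarged to $B_2''$ (in particular it is a complete proof whenever $B_2$ is weakly closed, e.g.\ $B_2=B(\mathcal H)$), and for a general $C^*$-algebra target a further idea --- for instance a reduction to the corner determined by $\overline{\operatorname{ran}}\,S$ rather than a limit of states --- would be needed. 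To be fair, the paper's own one-line deferral does not resolve this either.
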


\section{Acknowledgments}
The authors would like to thank Michael Anshelevich and John Williams for introducing them to this class of problems. Additionally, we should mention that an earlier circulated version of this paper had several results which were conditional on exactness or the existence of Agler model theory. However, with the advent of the Ball-Marx-Vinnikov framework \cite{BMV16}, these difficulties were removed.

The authors would also like to recognize the referee's generous contribution of time, in particular providing an extensive historical context and relevant references for the present work.

\bibliography{references}
\bibliographystyle{plain}

\end{document}